\newcommand{\Q}{\mathbb{Q}}
\newcommand{\val}{\textsf{val}}
\newcommand{\LL}{\mathbb{L}}
\newcommand{\K}{\mathbb{K}}
\DeclareMathOperator{\Jac}{\mathsf{Jac}}
\DeclareMathOperator{\Det}{\mathsf{Det}}
\DeclareMathOperator{\Sdup}{\mathcal{S}_{\mathsf{dup}}}
\newtheorem{thm}{Theorem}
\newtheorem{lem}[thm]{Lemma}
\newtheorem{prop}[thm]{Proposition}
\newtheoremstyle{note}
{3pt}
{3pt}
{}
{}
{\bfseries}
{.}
{.5em}
{}
\theoremstyle{note}
\newtheorem{ex}{Example}
\newtheorem*{ex1}{Example 1 (cont.)}
\newtheorem*{ex2}{Example 2 (cont.)}
\title[Systems of DDEs with one catalytic variable]{Effective algebraicity for solutions of systems of functional equations with one catalytic variable}
\author[]{Hadrien Notarantonio\addressmark{1, 2}, Sergey Yurkevich\addressmark{1,3}}
\address{
\addressmark{1}Inria Saclay, France\\ 
\addressmark{2}Sorbonne Université, France\\
\addressmark{3}University of Vienna, Austria}
\abstract{We study systems of $n \geq 1$ discrete differential equations of order $k\geq1$ in one catalytic variable and provide a constructive and elementary proof of algebraicity of their solutions. This yields effective bounds and a systematic method for computing the minimal polynomials. Our approach is a generalization of the pioneering work by~Bousquet-Mélou and Jehanne (2006).
}
\keywords{algebraic functional equations, combinatorics, algebraic algorithms, algebraicity bounds, catalytic variables}
\begin{document}

\maketitle

\section{Introduction}

Numerous combinatorial enumeration problems reduce to the study of functional equations which can be solved by a uniform method introduced by Bousquet-Mélou and Jehanne in the seminal work~\cite{BMJ06}. These functional equations, usually called discrete differential equations~(DDEs) with one catalytic variable, involve a bivariate generating function~$F\in\mathbb{Q}[u][[t]]$ associated to the enumeration problem, and are of the form
\begin{equation}\label{eq:DDE}
F(t, u) = f(u) + t \cdot Q(F(t, u), \Delta_a F(t, u), \dots, \Delta_a^k F(t, u), t, u),
\end{equation}
where $k \in \mathbb{N}$ (called the order of the DDE), $f$ and $Q$ are polynomials, and (for some $a \in \Q$, usually $0$ or $1$) $\Delta_a^\ell$ is the $\ell$th iteration of the operator $\Delta_a: \Q[u][[t]] \to \Q[u][[t]]$ defined~by 
\[
\Delta_a F(t,u) := \frac{F(t,u) - F(t,a)}{u-a}.
\]
In their paper, Bousquet-Mélou and Jehanne designed a ``non-linear kernel method'' which allows one to prove that the unique solution of (\ref{eq:DDE}) is always an algebraic function over $\Q(t,u)$. Significantly in practice, this approach yields an algorithm for finding the minimal polynomial of the specialization $F(t,a)$ and of the bivariate series~$F(t,u)$.

The main contribution of the present paper is a generalization of this method to the case of \emph{systems of discrete differential equations}. More precisely, we shall prove the following theorem. Here and in the following, we let $\K$ be a field of characteristic $0$.
\begin{thm}\label{thm:main_thm}
Let $n, k\geq 1$ be integers and $f_1, \ldots, f_n\in\K[u]$, $Q_1, \ldots, Q_n\in\K[y_1, \ldots, y_{n(k+1)}, t, u]$ be polynomials. Set  $\nabla^k F := (F, \Delta_a F, \ldots, \Delta_a^k F)$. Then the system of equations
\begin{equation}\label{eqn:init_system}
    \begin{cases}  
\textbf{\textup{\text{(E}}}_{\textbf{\textup{\text{F}}}_\textbf{\textup{\text{1}}}} \textbf{\textup{\text{):}}}
    \;\;\;F_1 = f_1(u) + t\cdot Q_1(\nabla^k F_1, \ldots, \nabla^k F_n, t, u),\\
   \indent \vdots \hfill \vdots \\
\textbf{\textup{\text{(E}}}_{\textbf{\textup{\text{F}}}_\textbf{{\text{n}}}} \textbf{\textup{\text{):}}}\;\;F_n = f_n(u) + t\cdot Q_n(\nabla^k F_1, \ldots, \nabla^k F_n, t, u)
    \end{cases}
\end{equation}
admits a unique vector of solutions~$(F_1, \ldots, F_n)\in\K[u][[t]]^n$, and all its components are algebraic functions over~$\K(t, u)$.
\end{thm}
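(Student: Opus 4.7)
The plan is to generalize the \emph{non-linear kernel method} of Bousquet-Mélou and Jehanne \cite{BMJ06} from a single equation to a system. Existence and uniqueness of $(F_1, \ldots, F_n) \in \K[u][[t]]^n$ is the easy part: it follows from a fixed-point argument in the $(t)$-adic topology, starting from $F_i^{(0)} := f_i(u)$ and iterating the map defined by~\eqref{eqn:init_system}. Each iterate agrees with its predecessor up to one additional order in~$t$ (since $Q_i$ is multiplied by~$t$), so the sequence converges to the unique fixed point.

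The core of the proof is the algebraicity statement. Using the identity
\[
\Delta_a^\ell F_i(t,u) = \frac{1}{(u-a)^\ell}\Bigl(F_i(t,u) - \sum_{m=0}^{\ell-1}(u-a)^m F_{i,m}\Bigr), \quad F_{i,m} := \Delta_a^m F_i(t,a),
\]
and clearing denominators, the $i$-th equation of~\eqref{eqn:init_system} becomes a polynomial equation $P_i\bigl(F_1, \ldots, F_n, F_{1,0}, \ldots, F_{n,k}, t, u\bigr) = 0$, exposing $n(k+1)$ \emph{catalytic unknowns} $F_{i,\ell} \in \K[[t]]$ which we seek to prove algebraic over~$\K(t)$.

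Next, I would implement the multivariate kernel step. Differentiating $P_i = 0$ with respect to $u$ yields a linear system
\[
\sum_{j=1}^n \frac{\partial P_i}{\partial y_j}\,\partial_u F_j + \frac{\partial P_i}{\partial u} = 0, \quad i = 1, \ldots, n,
\]
whose coefficient matrix is the Jacobian $J := (\partial P_i/\partial y_j)_{1 \le i,j \le n}$. Let $\Delta(u) := \det J$. For each Puiseux-series root $U = U(t)$ of $\Delta(u) = 0$ with positive $t$-valuation, the system becomes singular, and multiplying on the left by $\operatorname{adj}(J(U))$ forces additional polynomial relations on the unknowns. Combined with the $n+1$ equations $P_i(\cdot;t,U)=0$ for $i=1,\ldots,n$ together with $\Delta(U) = 0$, each such root contributes polynomial constraints tying $(U, F_1(t,U), \ldots, F_n(t,U))$ to the $F_{i,\ell}$.

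The final step is to eliminate the auxiliary unknowns $U$ and the $F_i(t,U)$ across all selected roots via iterated resultants, producing a polynomial system in $t$ and the $F_{i,\ell}$ alone. The main obstacle is to show that $\Delta(u)$ admits \emph{enough} distinct roots of positive $t$-valuation (namely $n(k+1)$), and that the ensuing elimination yields a zero-dimensional system cutting out finitely many candidates for each $F_{i,\ell}$; this requires extending the BMJ degree and genericity arguments, which exploit a scalar kernel polynomial, to the matrix setting where $\Delta(u)$ is an $n\times n$ determinant whose generic behavior in $u$ must be carefully controlled. Once the $F_{i,\ell}$ are shown to be algebraic over $\K(t)$, algebraicity of $F_i(t,u)$ over $\K(t,u)$ follows by viewing $F_i$ as a root of $P_i$ whose remaining coefficients are algebraic.
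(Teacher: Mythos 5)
Your outline matches the paper's high-level strategy (clear denominators, differentiate with respect to $u$, exploit roots of the Jacobian determinant, eliminate), but it leaves the genuinely hard part of the proof as an acknowledged ``main obstacle'' rather than resolving it, and it also contains an off-by-one error in the accounting that propagates into the root count.

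First, the accounting. The operator $\Delta_a^\ell$ for $1 \le \ell \le k$ involves only $F_i(t,a), \Delta_a F_i(t,a), \ldots, \Delta_a^{k-1} F_i(t,a)$, i.e.\ $k$ boundary unknowns per $F_i$, not $k+1$; the total is $nk$. Correspondingly, the correct target is $nk$ distinct roots of $\Det(u)=0$ (and $nk$ ``duplicated'' copies of the system), not $n(k+1)$. This matters because the dimension count is exactly what makes the eventual elimination zero-dimensional.

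Second, and more substantially, you write that the ``main obstacle is to show that $\Delta(u)$ admits enough distinct roots of positive $t$-valuation \ldots and that the ensuing elimination yields a zero-dimensional system,'' and then state this ``requires extending the BMJ degree and genericity arguments.'' That extension \emph{is} the content of the proof, and it is not routine: for a general system neither fact holds. The paper's resolution is to introduce a specific deformation of \eqref{eqn:init_system} (replacing $t$ by $t^\alpha$, inserting a perturbation $t\,\epsilon^k \sum_j \gamma_{i,j}\Delta^k G_j$ with a carefully chosen matrix $(\gamma_{i,j})$, and working over $\LL = \K(\epsilon)$), prove via a Newton-polygon argument that the deformed $\Det(u)=0$ has \emph{exactly} $nk$ distinct nonzero Puiseux roots (\cref{lem:det_sols}), and then establish zero-dimensionality and radicality not by ``iterated resultants'' but by invoking a Jacobian criterion (\cref{lem:jac_ideal}) for the duplicated system and proving the Jacobian is invertible at the relevant point via a delicate valuation analysis of a block-triangular decomposition (\cref{lem:det_Jaci,lem:Lambda}). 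Without such a deformation the method can fail outright --- the paper's \cref{ex:hard_particles} exhibits a natural system where, despite $\Det(u)=0$ having the right number of roots, the duplicated ideal has positive dimension --- so your plan as written does not yet constitute a proof. If you want to complete it, the deformation/perturbation step and the Jacobian invertibility argument are the pieces you must supply.
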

The key idea, analogous to the one in~\cite{BMJ06}, for proving this theorem is to define a deformation of (\ref{eqn:init_system}) that ensures the applicability of a multi-dimensional analog of the ``non-linear kernel method''. Stated explicitly, we show in \cref{lem:det_sols} that after deforming the equations as in (\ref{eqn:deformed_system}), the polynomial in $u$ defined by
the determinant of the Jacobian matrix associated to the numerator equations in~\eqref{eqn:init_system} (considered with respect to the $F_i$)
has exactly $nk$ solutions in an extension of the ring $\bigcup_{d \geq 1}\overline{\K}[[t^{1/d}]]$. Then, after a process of ``duplication of variables'', we construct a zero-dimensional and radical polynomial ideal, a non-trivial element of which must be the desired annihilating polynomial. The most difficult step consists in proving the invertibility of a certain Jacobian matrix (\cref{lem:det_Jaci} and \cref{lem:Lambda}) in order to justify the zero-dimensionality. We remark that an alternative, and possibly more practical, strategy is to reduce the initial system to a single functional equation. Our \cref{prop:preservation_Ui} ensures that such a reduction preserves the roots guaranteed in the deformation step, however, as we will show in~\cref{sec:complexity}, this method is not guaranteed to produce a zero-dimensional polynomial ideal in the end.

Similarly to the work by Bousquet-Mélou and Jehanne, our proof is effective, in the sense that it produces an algorithm for finding the minimal polynomials of the power series of interest. Moreover, we can deduce a bound on the algebraicity degree of each $F_i$. When the field~$\mathbb{K}$ is effective (e.g.~$\mathbb{K} = \mathbb{Q}$), we can also bound the arithmetic complexity of our algorithm, that is the number of operations~$(+, -, \times, \div)$ performed in~$\mathbb{K}$. Denoting by $\textsf{totdeg}(P)$ the total degree of a multivariate polynomial $P$, we obtain the following:
\begin{thm}\label{thm:quantitative_estimates}
In the setting of \cref{thm:main_thm}, let $(F_1, \ldots, F_n)\in\K[u][[t]]^n$  be the vector of solutions and $\delta := \max(\deg(f_1),\dots,\deg(f_n), \textsf{totdeg}(Q_1),\dots,\textsf{totdeg}(Q_n))$. Then the algebraicity degree of each $F_i(t, u)$ over~$\mathbb{K}(t, u)$ is bounded by $(2nk\delta)^{2n^3k^2+2n}/(nk)!^{nk}$. If~$\mathbb{K}$ is effective,
    there exists an algorithm computing the minimal polynomial of any $F_i(t, a)$ in $(2nk\delta)^{O(n^2k)}$
    operations in~$\K$.
\end{thm}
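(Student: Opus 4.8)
The plan is to read both estimates off the \emph{constructive} proof of \cref{thm:main_thm}, by tracking the number of variables and the degrees of the polynomials created at each step and then invoking a Bézout-type bound together with a complexity estimate for zero-dimensional ideals. Concretely, after the deformation~(\ref{eqn:deformed_system}) one clears the denominators $(u-a)^j$, adjoins the $nk$ roots of \cref{lem:det_sols} as fresh unknowns $U_1,\dots,U_{nk}$, and carries out the duplication step; the outcome is a polynomial system over $\K(t)$ in $N=O(n^2k)$ unknowns — the quantities $\Delta_a^\ell F_j(t,a)$ ($0\le\ell\le k$), the roots $U_i$, the values $F_j(t,U_i)$, and their duplicates — generated in total degree $(nk\delta)^{O(1)}$, whose ideal $I$ is zero-dimensional and radical (this being exactly \cref{lem:det_Jaci} and \cref{lem:Lambda}). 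Since $I$ is zero-dimensional, each $\Delta_a^\ell F_j(t,a)$ is a coordinate of a point of $V(I)$, hence has degree at most $\dim_{\K(t)}\K(t)[\text{vars}]/I$ over $\K(t)$; a Bézout count refined by the invariance of the duplicated construction under relabelling the $nk$ catalytic roots — the source of the factor $((nk)!)^{nk}$ — bounds this dimension by $(2nk\delta)^{2n^2k}/(nk)!^{nk}$.

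To pass from the boundary values to $F_i(t,u)$ itself I would substitute the now-algebraic quantities $\Delta_a^\ell F_j(t,a)$ into the original system~(\ref{eqn:init_system}). Over the field $\LL$ obtained from $\K(t,u)$ by adjoining these $n(k+1)$ algebraic numbers, every $\Delta_a^\ell F_j(t,u)$ is an affine function of $F_j(t,u)$, so~(\ref{eqn:init_system}) becomes a zero-dimensional system of $n$ equations of total degree $O(\delta)$ in $F_1(t,u),\dots,F_n(t,u)$; a second Bézout estimate bounds the degree of $F_i(t,u)$ over $\LL$ by $(\delta+1)^n$, while the compositum bound $[\LL:\K(t,u)]\le\prod_{j,\ell}\deg\big(\Delta_a^\ell F_j(t,a)\big)$ gives $[\LL:\K(t,u)]\le\big((2nk\delta)^{2n^2k}/(nk)!^{nk}\big)^{n(k+1)}$. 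Multiplicativity of degrees in the tower $\K(t,u)\subseteq\LL\subseteq\LL(F_i(t,u))$, together with $(\delta+1)^n\le(2nk\delta)^{2n}$ and the decision to retain only a single factor $((nk)!)^{nk}$ in the denominator, then yields the bound $(2nk\delta)^{2n^3k^2+2n}/(nk)!^{nk}$.

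For the algorithmic statement one runs the same construction effectively: (i) form~(\ref{eqn:deformed_system}) and adjoin $U_1,\dots,U_{nk}$; (ii) perform the duplication to get explicit generators of $I$; (iii) compute over $\K(t)$ a Gröbner basis of $I$ for an elimination order isolating the coordinate of $F_i(t,a)$, and read off the squarefree generator $P$ of the univariate elimination ideal; (iv) when $\K$ is effective, factor $P$ over $\K(t)$ and select the irreducible factor vanishing at $F_i(t,a)$, which is recognised by matching finitely many coefficients of the power-series expansion of $F_i(t,a)$, computed termwise from~(\ref{eqn:init_system}). For a zero-dimensional ideal in $N$ variables generated in degree $\le D_1$, step~(iii) costs $D_1^{O(N)}$ base-field operations, for instance by linear algebra on Macaulay matrices up to degree $O(ND_1)$; with $N=O(n^2k)$ and $D_1=(nk\delta)^{O(1)}$ this is $(2nk\delta)^{O(n^2k)}$, and since all $t$-degrees occurring are bounded by the same quantity, working in $\K[t]$ and factoring in step~(iv) add only a polynomial overhead, so the total is $(2nk\delta)^{O(n^2k)}$ operations in $\K$. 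I expect the real difficulty to be not the two Bézout applications but the degree bookkeeping through the duplication: one must keep $N$ and $D_1$ polynomial in $n,k,\delta$ while still exhibiting the symmetry that produces the $(nk)!^{nk}$ saving, and the zero-dimensionality of $I$ on which everything rests — the invertibility of the Jacobian in \cref{lem:det_Jaci} and \cref{lem:Lambda} — is already the crux of \cref{thm:main_thm}; the alternative reduction to a single functional equation of \cref{prop:preservation_Ui} does not help, since by \cref{sec:complexity} it need not yield a zero-dimensional ideal.
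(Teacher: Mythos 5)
Your plan follows the same high-level route as the paper's (very terse) proof sketch: form the duplicated zero-dimensional saturated ideal from \cref{lem:jac_ideal}, apply a B\'ezout-type count, extract the $(nk)!^{nk}$ saving from the $\mathfrak{S}_{nk}$-action on the duplicated system, and then solve the parametric polynomial system effectively. The paper cites Schost's parametric geometric resolution~\cite[Theorem~2]{Schost03} for the complexity where you propose Gr\"obner bases via Macaulay matrices; that is a defensible substitution in spirit, though Schost's result is the cleaner tool for the parametric setting over $\K(t)$ and is what actually delivers the claimed $(2nk\delta)^{O(n^2k)}$ cost.

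There is, however, a genuine flaw in your passage from the boundary values to $F_i(t,u)$. You correctly observe that each $\Delta_a^\ell F_j(t,a)$ is a coordinate of a point of $V(I)$ and hence has degree at most $\dim_{\K(t)}\K(t)[\text{vars}]/I$ over $\K(t)$, but you then bound $[\LL:\K(t,u)]$ by the \emph{product} $\prod_{j,\ell}\deg\bigl(\Delta_a^\ell F_j(t,a)\bigr)$. This compositum estimate is exponentially too weak: since all the $z_0,\dots,z_{nk-1}$ are coordinates of the \emph{same} zero-dimensional variety $V(I)$, the subfield $\LL=\K(t,u)(z_0,\dots,z_{nk-1})$ already satisfies $[\LL:\K(t,u)]\le\deg(I)$ directly, with no extra power of $n(k+1)$. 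Your version inflates the exponent to roughly $2n^3k(k+1)+2n$, overshooting the claimed $2n^3k^2+2n$, and your decision to ``retain only a single factor $(nk)!^{nk}$'' also goes the wrong way: discarding denominator factors weakens the bound rather than recovering the stated one. Independently, the intermediate B\'ezout estimate $(2nk\delta)^{2n^2k}/(nk)!^{nk}$ for $\deg(I)$ is not justified by a degree count of the generators of $\Sdup$ (the determinants $\Det^{(i)},P^{(i)}$ have total degree of order $n(k+1)\delta$, not $\delta$, and $\Sdup$ has $nk(n+2)$ equations), and it is inconsistent with the exponent you would need to land on $2n^3k^2+2n$ after your own tower step. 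Since the quantitative bookkeeping is exactly the content of this theorem, these gaps need to be repaired rather than waved through; the fix is to replace the compositum by the single estimate $[\LL:\K(t,u)]\le\deg(I)$ and to redo the B\'ezout count with the actual total degrees of $E_i^{(j)},\Det^{(j)},P^{(j)}$, following \cite[Section~3]{BoChNoSa22}.
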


Discrete differential equations are ubiquitous in enumerative combinatorics~\cite{BrTu64, Brown65, Tutte62}. Systems of DDEs also appear in a variety of different contexts throughout combinatorics, for instance for  hard particles on planar maps \cite[\S 5.4]{BMJ06}, inhomogeneous lattice paths~\cite{BK20}, or certain orientations with $n$ edges \cite[\S 5]{BoBMDoPe17}. The usual strategy for solving these systems of equations is to try to reduce a given system to a scalar equation and then apply the method of Bousquet-Mélou and Jehanne. This approach is usually ad-hoc and needs to exploit additional structure of the system. Moreover, since the reduced equation is in general not of the form~(\ref{eq:DDE}) anymore, the theory of~\cite{BMJ06} is not guaranteed to work. 

In the literature there exist two methods to overcome these theoretical issues. First, a deep theorem in commutative algebra by Popescu~\cite{Popescu85a}, so-called ``nested Artin approximation'', guarantees that equations of the form~(\ref{eqn:init_system}) always admit an algebraic solution (see also~\cite[Thm. 16]{BoBMDoPe17} for a statement of this theorem). Note that the nested condition is automatically satisfied in this case and that the uniqueness of the solution is obvious. A drawback of using Popescu's theorem, however, is that its proof is highly non-constructive and can only be applied as a ``black box'', whereas in practice one is often interested in the explicit minimal polynomials annihilating the solutions. Secondly, the frequent case when all polynomials $Q_1,\dots,Q_n$ in~(\ref{eqn:init_system}) are linear functions was effectively solved in the recent FPSAC article~\cite{BK20} by Buchacher and Kauers and independently by Asinowski, Bacher, Banderier and Gittenberger~\cite{ABBG20}. From this viewpoint, it is safe to say that our contribution is a common generalization of central results by Bousquet-Mélou and Jehanne~\cite{BMJ06} and the ``vectorial kernel method'' of~\cite{ABBG20, BK20}, and at the same time an effective and elementary proof of a special case of Popescu's theorem~\cite{Popescu85a}.

We shall highlight the following two examples of systems of DDEs in more detail. 

\begin{ex} \label{ex:eq27}
The following system of DDEs for the generating function of certain planar orientations was considered in~\cite[Eq.$(27)$]{BoBMDoPe17} and solved in the same work: 
\begin{equation}
    \begin{cases}\label{eq:eq27}
    \textbf{(E$_{\textbf{F}_1}$):} \;\;
    F_1(t, u) = 1 + t\cdot \big(u +2uF_1(t, u)^2 + 2uF_2(t, 1) + u\frac{F_1(t, u)-uF_1(t, 1)}{u - 1}\big),\\
    \textbf{(E$_{\textbf{F}_2}$):} \;\;
    F_2(t, u) = t\cdot \big(2uF_1(t, u)F_2(t, u) + uF_1(t, u) + uF_2(t, 1) + u\frac{F_2(t, u)-uF_2(t, 1) }{u - 1}\big).
    \end{cases}
\end{equation}
From our perspective,~\eqref{eq:eq27} has the advantage that it does not require any deformation and, as we will show in~\cref{sec:generic_equations_of_first_order}, it can be solved fast by a direct application of our method. It is thus a good illustration of the simplest non-trivial case of our approach.
\end{ex}

\begin{ex}\label{ex:hard_particles} This example of a system of DDEs modelling a particular case of hard particles on planar maps was introduced and solved in~\cite[Section~$11$]{BMJ06}: 
\begin{equation} \label{eq:exhardpart}
    \begin{cases}
    \textbf{(E$_{\textbf{F}_1}$):} \;\;F_1(t, u) = F_2(t, u) + tu^2F_1(t, u)^2 + tu\frac{uF_1(t, u) - F_1(t, 1)}{u-1},\\
    \textbf{(E$_{\textbf{F}_2}$):} \;\;F_2(t, u) = 1 + tsuF_1(t, u)F_2(t, u) + tsu\frac{F_2(t, u) - F_2(t, 1)}{u-1}.
    \end{cases}
\end{equation}
To apply our method directly, a deformation step~(\ref{eqn:deformed_system}) is necessary (see \cref{sec:complexity}).
\end{ex}
\medskip
The structure of the paper is as follows: In \cref{sec:generic_equations_of_first_order} we explain our method in the case of two equations of order one under the genericity assumption that no deformation is necessary. We summarize the method in an algorithm and showcase it explicitly on~\cref{ex:eq27}. \cref{sec:proof_thm} is devoted to the proofs of~\cref{thm:main_thm} and~\cref{thm:quantitative_estimates}. 
In the last~\cref{sec:complexity} we briefly explore an improvement to our approach which in theory has a better algorithmic complexity but which requires a new genericity assumption. We also discuss possible future works.

\section{The case of two generic equations of first order} \label{sec:generic_equations_of_first_order}

Before proving our main theorem in~\cref{sec:proof_thm}, we introduce our method in the situation of two equations
of order~$1$ and under a genericity assumption on the input system.

Starting with~\eqref{eqn:init_system}, we first multiply \textbf{(E$_{\textbf{F}_1}$)} and \textbf{(E$_{\textbf{F}_2}$)} by $(u-a)^{m_1}$ and $(u-a)^{m_2}$ respectively (for $m_1, m_2\in~\mathbb{N}$) in order to obtain a system with polynomial coefficients in~$u$. By a slight abuse of notation, we shall still write \textbf{(E$_{\textbf{F}_1}$)} and \textbf{(E$_{\textbf{F}_2}$)} for those equations. Note that this system induces polynomials $E_1, E_2$ in~$\K[x_1, x_2, z_0, z_1, t, u]$ whose specializations
to $x_1 = F_1(t, u), x_2 = F_2(t, u), z_0 = F_1(t, a), z_1 = F_2(t, a)$ are zero.

\begin{ex1}\label{ex:cont1_toy}Multiplying \textbf{(E$_{\textbf{F}_1}$)} and \textbf{(E$_{\textbf{F}_2}$)} in~\cref{ex:eq27}
by~$u-1$ gives 
\begin{equation*}
    \begin{cases}
    E_1 =  (1 - x_1)\cdot(u-1)+ t\cdot (2u^2x_1^2 - u^2z_0 + 2u^2z_1 - 2ux_1^2 + u^2 + ux_1 - 2uz_1 - u),\\
    E_2 =  x_2\cdot(1-u) + t\cdot (2u^2x_1x_2 + u^2x_1 - 2ux_1x_2 - ux_1 + ux_2 - uz_1).
    \end{cases}
\end{equation*}
\end{ex1}
\noindent
In the spirit of~\cite{BMJ06}, we now take the derivative of both equations with respect to~$u$:
\begin{equation}\label{eqn:starting_point}
    \begin{pmatrix}
    \partial_{x_1}E_1 & \partial_{x_2}E_1\\
    \partial_{x_1}E_2 & \partial_{x_2}E_2
    \end{pmatrix}
    \cdot \begin{pmatrix}
    \partial_u F_1 \\
    \partial_u F_2
    \end{pmatrix} 
     +\begin{pmatrix}
    \partial_u E_1\\
    \partial_u E_2
    \end{pmatrix} = 0.
\end{equation}
Define $\Det:=\partial_{x_1}E_1 \cdot \partial_{x_2}E_2 - \partial_{x_1}E_2 \cdot \partial_{x_2}E_1 \in \K[x_1,x_2,z_0,z_1,t,u]$ to be the determinant of the square matrix above.
One can show that 
$\Det(F_1(t, u), F_2(t, u), F_1(t, a), F_2(t, a), t, u)\in\K[[t]][[u]]$ admits either~$0, 1$ or $2$ distinct non-zero solutions $u=U(t)\in\bigcup_{d\geq 1}\overline{\K}[[t^{1/d}]] =: \overline{\K}[[t^{\frac{1}{\star}}]]$. We assume now that there exist $2$ such solutions $U_1, U_2 \in \overline{\K}[[t^{\frac{1}{\star}}]]$; we prove in \cref{sec:proof_thm} that it is always the case up to the deformation (\ref{eqn:deformed_system}).

Exploiting the idea of~\cite{BK20}, we now define~$v := (\partial_{x_1}E_2, \;\; - \partial_{x_1}E_1) \in \K[x_1,x_2,z_0,z_1,t,u]^2$ and plug $U_1$ for $u$ into~$v$ and~\eqref{eqn:starting_point}. Note that~$v$ is an element of the left-kernel of the square matrix in~\eqref{eqn:starting_point} $\bmod$ $\Det(x_1, x_2, z_0, z_1, t, u)$. After multiplication of \eqref{eqn:starting_point} by~$v$ on the left, we find a new polynomial relation between $F_1(t, U_i), F_2(t, U_i), F_1(t, a), F_2(t, a), t$ and $U_i$, namely $ \partial_{x_1} E_1 \cdot\partial_{u}E_2 - \partial_{x_1}E_2\cdot \partial_{u}E_1 = 0$ when evaluated at $x_1 = F_1(t, U_i), x_2 = F_2(t, U_i), z_0=F_1(t, a), z_1 = F_2(t, a), u = U_i$. We denote this polynomial by~$P \in \K[x_1, x_2, z_0, z_1, t, u]$.
Define the polynomial
system~$\mathcal{S} := (E_1, E_2, \Det, P)\in\K[t][x_1, x_2, z_0, z_1, u]^4$. It admits the
non-trivial solutions $(F_1(t, U_i), F_2(t, U_i), F_1(t, a), F_2(t, a), U_i)\in\overline{\mathbb{K}}[[t^{\frac{1}{\star}}]]^5$, for $i \in\{1, 2\}$.

\begin{ex1}Continuing \cref{ex:eq27}, we find 
\begin{equation*}
    \begin{cases}
    \Det = (4tu^2x_1 - 4tux_1 + tu - u + 1)(2tu^2x_1 - 2tux_1 + tu - u + 1),\\
    P = -2tx_1x_2 - tx_1 + tx_2 - tz_1 - x_2 + P_1\cdot u
     + P_2\cdot u^2 + P_3\cdot u^3, 
    \end{cases}
\end{equation*}
where $P_1, P_2, P_3$ are explicit (but relatively big) polynomials in~$\mathbb{Q}[x_1, x_2, z_0, z_1, t]$.
\end{ex1}

Now, generalizing naturally the steps of~\cite{BMJ06}, we define for $i \in \{0,1\}$ the polynomial systems $\mathcal{S}_i := \mathcal{S}(x_{2i+1},\; x_{2i+2},  \;z_0, \;z_1, \;t, \;u_{i+1})$ by ``duplicating'' variables. We call the situation ``\emph{generic}'' if the ideal $\langle \mathcal{S}_0, \mathcal{S}_1, m\cdot(u_1 - u_2) - 1\rangle $ has dimension~$0$ over~$\K(t)$. In this case, in order to find an annihilating polynomial of $F_1(t,a)$, it is enough to compute a non-zero element of $\langle \mathcal{S}_0, \mathcal{S}_1, m\cdot(u_1 - u_2) -1\rangle  \cap \K[z_0, t]$.
\begin{ex1}Continuing Example~\ref{ex:eq27}, we compute\footnote{All computations in this paper have been performed in Maple using msolve~\cite{msolve}.} a generator of the polynomial ideal
  $\langle \mathcal{S}_0, \mathcal{S}_1, m\cdot(u_1 - u_2) -1\rangle \cap \Q[z_0, t]$.
  It has degree $13$ in $z_0$ and~$14$ in $t$. In particular, it contains in its factors
  the minimal polynomial of $F_1(t, 1)$ given by 
  $64t^3z_0^3 + (48t^3 - 72t^2 + 2t)z_0^2 - (15t^3 - 9t^2 - 19t + 1)z_0 + t^3 + 27t^2 - 19t + 1$.
\end{ex1}
We summarize the presented algorithm in a more compact form:\vspace{-0.35cm}

\begin{algorithm}[!ht]
\DontPrintSemicolon
  \KwInput{A ``generic'' system of two DDEs \textbf{(E$_{\textbf{F}_1}$)}, \textbf{(E$_{\textbf{F}_2}$)} of order~$1$.}
  \KwOutput{A non-zero $R\in\K[z_0, t]$ annihilating $F_1(t, a)$.}
  Replace \textbf{(E$_{\textbf{F}_1}$)} and \textbf{(E$_{\textbf{F}_2}$)} by their respective numerators and denote by $E_1$ and $E_2$ the associated polynomials in~$\K[x_1, x_2, z_0, z_1, t, u]$.\;
  Compute $\Det:=\partial_{x_1}E_1 \cdot \partial_{x_2}E_2 - \partial_{x_1}E_2 \cdot \partial_{x_2}E_1$ and $P := \partial_{x_1}E_1 \cdot \partial_u E_2 - \partial_{x_1}E_2\cdot \partial_u E_1 $.\;
  Set $\mathcal{S} := (E_1, E_2, \Det, P)\subset\K[x_1, x_2, z_0, z_1, t, u]$.\;
  For $0\leq i \leq 1$, define $\mathcal{S}_i := \mathcal{S}(x_{2i+1},\; x_{2i+2},  \;z_0, \;z_1, \;t, \;u_{i+1})$.\;
  \textbf{Return} a non-zero element of 
  $\langle \mathcal{S}_0, \mathcal{S}_1, m\cdot(u_1 - u_2) -1\rangle  
  \cap \K[z_0, t]$.
\caption{\label{algo:algo1}   \vspace{-0.5cm} Solving systems of two discrete differential equations of order~$1$.}
\end{algorithm}\vspace{-0.35cm}

As already stated, \cref{sec:proof_thm} ensures that for a non-generic input, after a deformation, we can use an algorithm in spirit of Algorithm~\ref{algo:algo1}. 

We remark that if the strategy above is applied in the case of a single equation of first order
$F_1 = f(u) + t\cdot Q_1(F_1, \Delta_a F_1,  t, u)$, the presented method simplifies to the  classical algorithm in~\cite{BMJ06} relying on studying the ideal $\langle E_1, \partial_{x_1} E_1, \partial_u E_1 \rangle$. Stated explicitly, $\partial_{x_1} E_1$ plays the role of $\Det$ and $\partial_uE_1$ plays the role of $P$ 
(as we can take here $v = 1$). 

\section{Proofs of \cref{thm:main_thm} and \cref{thm:quantitative_estimates}} \label{sec:proof_thm}
We start by proving \cref{thm:main_thm}. As explained before, the statement and proof can be seen as a generalization of~\cite[Theorem 3]{BMJ06} and \cite[Theorem 2]{BK20}, so several steps are done analogously. Without loss of generality we assume that $a=0$ and set $\Delta := \Delta_0$.

Denote by $m_1, \ldots, m_n$ the least positive integers greater than or equal to~$k$ such that multiplying~\textbf{($\textbf{E}_{\textbf{F}_{\textit{\textbf{i}}}}$)} in \eqref{eqn:init_system} by $u^{m_i}$ gives a polynomial equation in $u$. Set $\beta := \lfloor 2M/k \rfloor $ and $\alpha:= n^2k\cdot (\beta + 1) + nM$, where $M := m_1 + \cdots + m_n$. 
Let $\epsilon$ be a new variable, $\LL := \K(\epsilon)$, and let $(\gamma_{i, j})_{1\leq i, j\leq n}$ be defined by $\gamma_{i, i} = i^k$ and $\gamma_{i, j} = t^\beta$ for $i\neq j$. Then consider the following system which is a deformation of~\eqref{eqn:init_system}:
\begin{equation}\label{eqn:deformed_system}
    \begin{cases}
\textbf{\textup{\text{(E}}}_{\textbf{\textup{\text{G}}}_\textbf{\textup{\text{1}}}} \textbf{\textup{\text{):}}} \;\;\;G_1 = f_1(u) + t^{\alpha}\cdot Q_1(\nabla^kG_1, \nabla^kG_2, \ldots, \nabla^kG_n, t^{\alpha}, u)
    + t\cdot \epsilon^{k}\cdot \sum_{i = 1}^n \gamma_{1, i} \cdot \Delta^kG_i,\\
     \qquad \qquad \;\;\; \vdots \hfill \vdots \\
\textbf{\textup{\text{(E}}}_{\textbf{\textup{\text{G}}}_\textbf{{\text{\textit{n}}}}} \textbf{\textup{\text{):}}} \;\;\;G_n = f_n(u) + t^{\alpha}\cdot Q_n(\nabla^kG_1, \nabla^kG_2, \ldots, \nabla^kG_n, t^{\alpha}, u)
    + t\cdot \epsilon^{k}\cdot \sum_{i = 1}^n \gamma_{n, i}\cdot\Delta^kG_i.
    \end{cases}
\end{equation}
The fixed point nature of these equations still implies that there exists a unique solution $(G_1, \ldots, G_n)\in\LL[u][[t]]^n$.
Remark that the equalities $F_i(t^{\alpha}, u) = G_i(t, u, 0)$ relate the formal power series
solutions of~\eqref{eqn:init_system} and of~\eqref{eqn:deformed_system}.
Hence, showing that each $G_i$ is algebraic over $\LL(t, u)$ is enough to prove \cref{thm:main_thm}. Moreover, as we will see later, the algebraicity of each $G_i$ follows from the algebraicity 
of~$G_1(0), \ldots, \partial_u^{k-1}G_1(0), \ldots, G_n(0), \ldots, \partial_u^{k-1}G_n(0)$. Here, and in what follows, we shall use the short notation $G_i(u)\equiv G_i(t, u, \epsilon)$, $\partial_0G_i(u)\equiv G_i(u), G_i(0), \partial_u G_i(0), \ldots, \partial_u^{k-1}G_i(0)$ and $A(u)\equiv A(\partial_0G_1, \ldots, \partial_0G_n, t, u)$ for any polynomial $A\in\LL[X_1, \ldots, X_n, t, u]$ with $X_j := x_j, z_{k(j-1)}, \ldots, z_{kj-1}$. In the case~$n=1$, this notation implies that for any~$0\leq i \leq k-1$, the variable $z_i$ stands for~$\partial_u^iF_1(t, a)$.

Let us define $Y_{i, 0} := x_i$ and $Y_{i, j} := ({x_i - z_{k(i-1)} - \cdots - \frac{u^{j-1}}{(j-1)!}z_{k(i-1)+j-1})/u^j}$ for $j\geq 1$.
With these definitions, (\ref{eqn:deformed_system}) is equivalent to the following system of polynomial equations
\begin{equation}\label{eq:E_i's}
    \begin{cases}
    E_1 := u^{m_1}\cdot (f_1(u) - x_1 + t^{\alpha}\cdot Q_1(Y_{1, 0}, \ldots, Y_{1, k},Y_{2,0}, \ldots, Y_{n, k}, t^{\alpha}, u) 
    + t\cdot \epsilon^{k}\cdot\sum_{i = 1}^n \gamma_{1, i}\cdot Y_{i, k}) = 0 ,\\
    \indent \vdots \hfill \vdots \\
    E_n :=  u^{m_n}\cdot (f_n(u) - x_n + t^{\alpha}\cdot Q_n(Y_{1, 0}, \ldots, Y_{1, k},Y_{2,0}, \ldots, Y_{n, k}, t^{\alpha}, u)
    + t\cdot \epsilon^{k}\cdot\sum_{i = 1}^n \gamma_{n, i}\cdot Y_{i, k}) = 0.
    \end{cases}
\end{equation}
Like in (\ref{eqn:starting_point}), we take the derivative with respect to~$u$ of these equations and find
\begin{equation}\label{eqn:En_diff_u}
    \begin{pmatrix}
    \partial_{x_1}E_1 & \dots & \partial_{x_n}E_1\\
    \vdots & \ddots & \vdots \\ 
    \partial_{x_1}E_n & \dots  & \partial_{x_n}E_n
    \end{pmatrix}
    \cdot \begin{pmatrix}
    \partial_u G_1 \\
    \vdots \\
    \partial_u G_n
    \end{pmatrix} 
    + \begin{pmatrix}
    \partial_u E_1\\
    \vdots\\
    \partial_u E_n  
    \end{pmatrix} =0. 
\end{equation}
Let $\Det \in \LL[X_1,\dots,X_n,t][u]$ be the determinant of the square matrix $(\partial_{x_j}E_i)_{1\leq i,j \leq n}$ above. The following lemma on the number of distinct solutions to $\Det(u) = 0$ is the first main step in our proof. 
\begin{lem}\label{lem:det_sols}
    $\Det(u) = 0$ admits exactly $nk$ distinct non-zero solutions
    $U_1, \ldots, U_{nk}\in \overline{\LL}[[t^{\frac{1}{\star}}]]$. 
\end{lem}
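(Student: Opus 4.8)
The plan is to compute the lowest-order term of $\Det(u)$ as a polynomial in $u$ with coefficients in a suitable Puiseux-series ring, and to show that it has exactly $nk$ nonzero roots by a Newton-polygon / valuation argument. First I would record that, by the fixed-point recursion, each $G_i \in \LL[u][[t]]$ has $G_i(t,u,\epsilon)|_{t=0} = f_i(u)$, so modulo $t$ the quantities $Y_{i,j}$ specialize to the ``divided differences'' of $f_i$, which are genuine polynomials in $u$. Plugging $t=0$ into $E_i$ from (\ref{eq:E_i's}) gives $E_i \equiv u^{m_i}(f_i(u) - x_i)$, hence $\partial_{x_j}E_i|_{t=0} = -u^{m_i}\delta_{i,j}$ and the matrix $(\partial_{x_j}E_i)$ is, modulo $t$, diagonal with determinant $\pm u^{M}$. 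So $\Det(u)$ has a pure power of $u$ as its ``$t=0$ part'', meaning all $nk$ of the expected nonzero roots must degenerate to $0$ or $\infty$ as $t\to 0$; they are genuine Puiseux series in $t^{1/\star}$ with positive valuation, which is exactly why the deformation introduces the factors $t^\alpha$, $t^\beta$ and $\epsilon^k$.

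The heart of the argument is then a careful bookkeeping of $u$-degrees and $t$-valuations. I would expand $\Det(u)$, viewed in $\LL[X_1,\dots,X_n,t][u]$ and then specialized at $X=\partial_0 G$, and identify: (i) its $u$-degree, say $D$; (ii) its $u$-adic order $\mathrm{ord}_u \Det$; and (iii) for each intermediate power $u^\ell$, the $t$-valuation of the corresponding coefficient. The deformation is engineered precisely so that the $\epsilon^k t$-terms $\sum_i \gamma_{i,j}Y_{j,k}$ contribute, after taking $\partial_{x_j}$ and forming the determinant, a ``clean'' leading/trailing behaviour: the matrix $(\partial_{x_j}E_i)$ is approximately $-\,\mathrm{diag}(u^{m_i})$ plus a correction of the form $\epsilon^k t \cdot (\gamma_{i,j}\,\partial_{x_j}Y_{j,k})\cdot u^{m_i}$, where $\partial_{x_j}Y_{j,k} = u^{-k}$, so the correction to row $i$ is $\epsilon^k t\, u^{m_i-k}(\gamma_{i,1},\dots,\gamma_{i,n})$. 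Since $m_i \ge k$ this is still polynomial in $u$. Expanding the determinant of $-\mathrm{diag}(u^{m_i}) + \epsilon^k t\, u^{m_i-k}\Gamma$ (with $\Gamma = (\gamma_{i,j})$, which is chosen invertible since its diagonal entries $i^k$ are distinct units while off-diagonal entries are $t^\beta$) and tracking the Newton polygon in the variable $u$ with the $t$-valuation as height, one sees that the segment of the polygon corresponding to the nonzero small roots has horizontal length exactly $nk$, and that this segment has a single slope, so all $nk$ roots have the same $t$-valuation and their leading coefficients are the nonzero roots of a polynomial of degree $nk$ with no repeated roots (the latter because $\Gamma$ is invertible and the $\gamma_{i,i}$ are pairwise distinct). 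By Newton–Puiseux, each of these lifts to a distinct element of $\overline{\LL}[[t^{1/\star}]]$.

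I would organize this as: Step 1, the $t=0$ reduction showing $\Det(u) = \pm u^M + O(t)$ and hence no nonzero root survives at $t=0$; Step 2, an exact description of the matrix $(\partial_{x_j}E_i)$ including the $Q_i$-contributions, showing the $Q_i$-terms carry a higher power of $t$ (namely $t^\alpha$ with $\alpha$ chosen much larger than everything coming from the $\epsilon^k t$-part) and so do not affect the relevant Newton-polygon segment; Step 3, the determinant expansion and Newton-polygon computation over $\LL((t^{1/\star}))$ isolating one segment of horizontal length $nk$; Step 4, checking that the associated ``characteristic polynomial'' of degree $nk$ has all roots simple and nonzero, using invertibility of $\Gamma$ and distinctness of its diagonal; Step 5, invoking Newton–Puiseux to conclude exactly $nk$ distinct nonzero solutions in $\overline{\LL}[[t^{1/\star}]]$, and noting that the remaining roots of $\Det(u)=0$ either are $0$ or have negative $u$-valuation / lie outside the ring. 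The main obstacle I anticipate is Step 3–4: one must verify that the parameters $\alpha = n^2k(\beta+1)+nM$ and $\beta = \lfloor 2M/k\rfloor$ are large enough that (a) the $Q_i$ contributions are negligible on the governing Newton segment, and (b) the potential $t^\beta$ off-diagonal entries of $\Gamma$ do not collapse distinct leading coefficients of the roots nor create spurious cancellations — this is a somewhat delicate inequality-chasing argument, and it is presumably why these specific (and otherwise mysterious) values of $\alpha,\beta$ were chosen. A secondary subtlety is making precise the claim ``$\Det(u)=0$ has $nk$ roots'' rather than ``at least $nk$'': one needs the $u$-degree of $\Det(u)$, together with the known root $u=0$ with its multiplicity, to account for all remaining roots, so the degree bookkeeping in Step 2 must be sharp, not just an inequality.
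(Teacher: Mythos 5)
Your plan follows the same route as the paper's proof: reduce mod $t$ to see that all roots of $\Det(u)=0$ in $\overline{\LL}[[t^{1/\star}]]$ must be ``small''; isolate the leading part of the matrix $(\partial_{x_j}E_i)$ as $-\mathrm{diag}(u^{m_i}) + t\epsilon^{k}\,\mathrm{diag}(u^{m_i-k})\,\Gamma$ with the $Q_i$-contributions pushed to order $t^\alpha$; and run a Newton-polygon argument on the determinant, after dividing row $i$ by $u^{m_i-k}$, to get the single segment of slope $-1/k$ and a characteristic polynomial $\prod_{j=1}^n(-c^k+\epsilon^k j^k)$ with $nk$ simple nonzero roots $c = j\epsilon\zeta^\ell$. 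This matches the paper almost step for step, including the role of the diagonal entries $\gamma_{i,i}=i^k$ and the remark that the off-diagonal $t^\beta$-entries are subdominant.

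The one place your plan would fail as written is the ``secondary subtlety'' you flag, the upper bound (exactly $nk$, not at least $nk$). You propose to finish by ``degree bookkeeping'' in $u$, counting roots via the $u$-degree of $\Det(u)$ and the multiplicity of the root $u=0$. But after specializing $x_j \mapsto G_j(u)$, the object $\Det(u)$ lives in $\LL[u][[t]]$ with unbounded degree in $u$ (the $G_j$ are power series in $t$ with polynomial but unboundedly growing $u$-degree), so there is no finite $u$-degree to count with. The paper instead gets the upper bound from exactly the data you compute in your Step 1: the constant-in-$t$ coefficient of the row-normalized determinant is $\pm u^{nk}$, and a Newton-polygon/Weierstrass-type count (the paper invokes \cite[Theorem 2]{BMJ06}) shows there are at most $nk$ Puiseux-series roots. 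In short, the correct tool is the one you already use for the lower bound; the ``sharp $u$-degree'' you ask for does not exist. A smaller nit: to see the $nk$ leading coefficients $j\epsilon\zeta^\ell$ are pairwise distinct you need more than invertibility of $\Gamma$ and distinctness of the $\gamma_{i,i}$; one must also use that the $j\in\{1,\dots,n\}$ are distinct positive rationals, so no ratio $j/j'$ is a nontrivial $k$-th root of unity.
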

\begin{proof} Note that we have
\begin{equation*}
   \Det(u) = \det\begin{pmatrix}
    -u^{m_1} + t\epsilon^{k} \gamma_{1,1} u^{m_1-k} &  \cdots & t\epsilon^{k}\gamma_{1, n}u^{m_1-k}\\
    \vdots & \ddots & \vdots \\
    t\epsilon^{k}\gamma_{n, 1}u^{m_n-k}  &\cdots & -u^{m_n} + t\epsilon^{k}\gamma_{n, n}u^{m_n-k}
    \end{pmatrix}
     + O(t^\alpha u^{M-nk}).
\end{equation*}
For every $i$ we first divide the $i^{\text{th}}$ row by~$u^{m_i-k}$. 
Then, using the definition of $\gamma_{i,j}$ and $\alpha,\beta \geq n$, we see that the matrix above becomes diagonal mod $t^{n+1}$ and its determinant mod $t^{n+1}$ simplifies to $\prod_{j=1}^n (-u^k + t \epsilon^k j^k) \bmod t^{n+1}$. Hence, computing the first terms of a solution in~$u$ by using Newton polygons, we find $nk$ distinct solutions $u=U_1(t), \ldots, U_{nk}(t)$ whose first terms are given by
$\zeta^\ell \cdot t^{\frac{1}{k}}\cdot \epsilon + O(t^{\frac{2}{k}}), \ldots, \zeta^\ell \cdot n\cdot t^{\frac{1}{k}}\cdot \epsilon + O(t^{\frac{2}{k}})
\in\overline{\LL}[[t^{\frac{1}{\star}}]]$, for $\zeta$
a $k$-primitive root of unity and for all $1\leq \ell\leq k$. Finally, note that the constant coefficient in $t$ of $\prod_{j=1}^n (-u^k + t \epsilon^k j^k)$ has degree $nk$ so by \cite[Theorem 2]{BMJ06} there cannot be more than $nk$ solutions to $\Det(u)=0$ in $\overline{\LL}[[t^{\frac{1}{\star}}]]$.
\end{proof} 

Now, let $P$ be the determinant of the square matrix $(\partial_{x_j}E_i)_{1\leq i,j \leq n}$ where the last column $(\partial_{x_n}E_1,\dots,\partial_{x_n}E_n)$ is replaced by $(\partial_{u}E_1,\dots,\partial_{u}E_n)$.
It is easy to see with standard linear algebra arguments that if $\Det=0$ then (\ref{eqn:En_diff_u}) implies that $P=0$. Hence, we define the polynomial system $\mathcal{S} := (E_1, \ldots, E_n, \Det, P)$ in~$\LL[t][X_1, \ldots, X_n, u]$. We see that $\mathcal{S}$ is a system with exactly~$n+2$ equations in the~$nk+n+1$ variables $z_0, \ldots, z_{nk-1}, x_1, \ldots, x_n, u$ (here $t$ and $\epsilon$ are parameters). We wish to construct a zero-dimensional ideal, so we introduce the \emph{duplicated system} $\Sdup := (\mathcal{S}_1, \ldots, \mathcal{S}_{nk})$, defined in~$\mathbb{L}(t)[x_1, \ldots, x_{n^2k},z_0,\dots,z_{nk-1}, u_1, \ldots, u_{nk}]$.
This system is built from $nk(n+2)$ equations and~$nk(n+2)$ variables.

The following lemma is proven in~\cite[Lemma~2.10]{BoChNoSa22} as a consequence of Hilbert's Nullstellensatz and \cite[Theorem~16.19]{Eisenbud95}:
\begin{lem} \label{lem:jac_ideal}
    Assume that the Jacobian matrix $\Jac_{\Sdup}$ of~$\Sdup$, considered with respect to the variables 
    $x_1, \ldots, x_{n}, u_1, \ldots, x_{n^2k-n}, \ldots, x_{n^2k}, u_{nk},$
    $z_0, \ldots, z_{nk-1}$, is invertible at the point
    \begin{align*}
        \mathcal{P} = 
        (G_1&(U_1), \ldots, G_n(U_1), U_1, \ldots, G_1(U_{nk}), \ldots, G_n(U_{nk}), U_{nk}, {G}_1(0), \ldots, \partial_u^{k-1}{G}_1(0), \ldots, \\
        &{G}_n(0), \ldots, \partial_u^{k-1}{G}_n(0))
        \in\overline{\LL}[[t^{\frac{1}{\star}}]]^{nk(n+1)}\times\LL[[t]]^{nk}.
    \end{align*}
    Then the saturated ideal $\langle \Sdup \rangle : \det(\Jac_{\Sdup})^\infty$ is zero-dimensional and radical over $\LL(t)$. Moreover, $\mathcal{P}$ lies in the zero set of $\langle \Sdup \rangle : \det(\Jac_{\Sdup})^\infty$.
\end{lem}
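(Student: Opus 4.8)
The plan is to exploit that $\Sdup$ is a \emph{square} polynomial system: it consists of $N := nk(n+2)$ polynomials $f_1,\dots,f_N$ in the ring $R := \LL(t)[x_1,\dots,x_{n^2k},z_0,\dots,z_{nk-1},u_1,\dots,u_{nk}]$ in exactly $N$ variables, and $J := \det(\Jac_{\Sdup})$ is the Jacobian determinant with respect to all of them. Writing $I := \langle\Sdup\rangle$ and $\mathfrak a := I : J^\infty$, the first step I would take is to recall the standard identity $\mathfrak a = IR_J\cap R$, where $R_J$ is the localization of $R$ at the powers of $J$; this gives an injection $R/\mathfrak a\hookrightarrow R_J/IR_J=(R/I)_J$ and identifies the geometric zero set of $\mathfrak a$ with the Zariski closure of $V(I)\setminus V(J)$.

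The heart of the argument is to show that every common zero of the $f_i$'s off $V(J)$ is isolated and reduced. Let $\mathcal C$ be an algebraically closed overfield of $\LL(t)$; the field of Puiseux series works and already contains the coordinates of $\mathcal P$, since the ring $\overline{\LL}[[t^{\frac{1}{\star}}]]$ embeds into it. For $p\in\mathcal C^N$ with $f_1(p)=\cdots=f_N(p)=0$ and $J(p)\neq 0$, the matrix $\bigl(\partial_{x_j}f_i(p)\bigr)_{i,j}$ is an invertible $N\times N$ matrix over $\mathcal C$, so by the formal inverse function theorem the endomorphism of $\mathcal C[[y_1,\dots,y_N]]$ (with $y_j:=x_j-p_j$) sending $y_j\mapsto f_j$ is an automorphism. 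Since $I$ is generated by the $f_j$, this gives $\mathcal C[[y]]/I\mathcal C[[y]]\cong\mathcal C[[y]]/(y_1,\dots,y_N)=\mathcal C$, i.e.\ the completed local ring of $V(I)$ at $p$ is a field, so $p$ is an isolated reduced point. By Noetherianity only finitely many such $p$ exist, hence $\mathfrak a$ is zero-dimensional; and $(R/I)_J$, being Artinian with all its local rings equal to fields, is reduced, so its subring $R/\mathfrak a$ is reduced and $\mathfrak a$ is radical. (Zero-dimensionality and radicality descend from $\mathcal C$ to $\LL(t)$ along the faithfully flat --- and, in characteristic $0$, separable --- extension $\LL(t)\hookrightarrow\mathcal C$; alternatively one may quote the Jacobian criterion over the perfect field $\LL(t)$, i.e.\ \cite[Theorem~16.19]{Eisenbud95}, as done in \cite[Lemma~2.10]{BoChNoSa22}.)

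It then remains to verify that $\mathcal P$ lies in the zero set of $\mathfrak a$. Specializing $\mathcal S=(E_1,\dots,E_n,\Det,P)$ at $x_i\mapsto G_i(U_j)$, $z_{k(i-1)+\ell}\mapsto\partial_u^\ell G_i(0)$, $u\mapsto U_j$ annihilates each $E_i$ because $(G_1,\dots,G_n)$ solves~\eqref{eqn:deformed_system}, annihilates $\Det$ because $U_j$ is a root of $\Det(u)=0$ by \cref{lem:det_sols}, and annihilates $P$ by expanding the determinant defining $P$ along its last column and using~\eqref{eqn:En_diff_u} with $\Det=0$. Since $\Sdup$ is $\mathcal S$ with variables renamed block by block, $\mathcal P$ is a common zero of $f_1,\dots,f_N$; and as $J(\mathcal P)\neq 0$ by hypothesis, the relation $gJ^m\in I$ for $g\in\mathfrak a$ forces $g(\mathcal P)=0$ in $\mathcal C$.

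The step I expect to be the main obstacle is the middle one: the Jacobian/smoothness argument must be run over the base field $\LL(t)$, which is not algebraically closed and lives next to the slightly unusual ring of Puiseux series, so some care is needed both in the implication ``invertible Jacobian $\Rightarrow$ isolated reduced point'' and in descending zero-dimensionality and radicality; it is exactly the characteristic-$0$ assumption (perfectness of $\LL(t)$) that makes this go through cleanly.
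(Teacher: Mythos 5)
Your proof is correct, and it essentially reconstitutes the argument the paper outsources by citation: the lemma is quoted from~\cite[Lemma~2.10]{BoChNoSa22}, stated there as a consequence of Hilbert's Nullstellensatz and the Jacobian criterion~\cite[Theorem~16.19]{Eisenbud95}, and your proof uses precisely that toolkit (the square structure of $\Sdup$, the Jacobian criterion realized as the formal inverse function theorem over the algebraically closed Puiseux field, faithfully flat descent to $\LL(t)$ in characteristic~$0$, and the same verification via \cref{lem:det_sols} and~\eqref{eqn:En_diff_u} that $\mathcal P\in V(\Sdup)\setminus V(\det\Jac_{\Sdup})$). One cosmetic point: to conclude that $\mathcal{O}_{V(I),p}$ is a field from $\widehat{\mathcal{O}}_{V(I),p}\cong\mathcal C$, the injection into the completion alone does not suffice (a subring of a field need not be a field); rather note $\mathfrak m/\mathfrak m^2\cong\widehat{\mathfrak m}/\widehat{\mathfrak m}^2=0$ and apply Nakayama to get $\mathfrak m=0$.
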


Therefore, in order to conclude the algebraicity of $G_i(0), \ldots, \partial_u^{k-1}G_i(0)$ over $\LL(t)$ for all $1\leq i \leq n$, it is enough to justify that $\Jac_{\Sdup}$ is invertible at $\mathcal{P}$. Then, by \cref{lem:jac_ideal}, it will follow that it is possible to apply effective techniques from polynomial elimination theory and find annihilating polynomials for the power series of interest.

The idea for proving that $\det(\Jac_{\Sdup})(\mathcal{P}) \neq 0$, analogous to the proof in \cite{BMJ06}, is to show first that $\Jac_{\Sdup}(\mathcal{P})$ can be rewritten as a block triangular matrix. We will then show that each such block is invertible by carefully analyzing its lowest valuation in~$t$.

If $A \in\LL[t][X_1, \ldots, X_n, u]$,
we shall define its ``$i^\text{th}$ duplicated polynomial'' as $A^{(i)} := A(X_{ni+1}, \ldots, X_{n(i+1)}, u_i)$ . Then the Jacobian matrix $\Jac_{\Sdup}(\mathcal{P})$ has the shape
\begin{align*}
    \Jac_{\Sdup}(\mathcal{P}) = \begin{pmatrix}
    A_1 &        & 0  & B_1\\
        & \ddots &    & \vdots \\
    0   &        & A_{nk} & B_{nk}\\
    \end{pmatrix} \in \overline{\LL}[[t^{\frac{1}{\star}}]]^{nk(n+2) \times nk(n+2)},
\end{align*}
where the matrices $A_i \in  \overline{\LL}[[t^{\frac{1}{\star}}]]^{(n+2) \times (n+1)}$ and $B_i \in  \overline{\LL}[[t^{\frac{1}{\star}}]]^{(n+2) \times nk}$ are given by:
\begin{align*}
 \footnotesize   
 A_i := \begin{pmatrix}
    \partial_{x_1}E_1^{(i)}(U_i) & \dots & \partial_{x_n}E_1^{(i)}(U_i)  & \partial_{u_i}E_1^{(i)}(U_i)  \\
    \vdots & \ddots & \vdots & \vdots \\
    \partial_{x_1}E_n^{(i)}(U_i)  & \dots & \partial_{x_n}E_n^{(i)}(U_i)  & \partial_{u_i}E_n^{(i)}(U_i)  \\
     \partial_{x_1}\Det^{(i)}(U_i)  & \dots & \partial_{x_n}\Det^{(i)}(U_i)  & \partial_{u_i}\Det^{(i)}(U_i)  \\
    \partial_{x_1}P^{(i)}(U_i)  & \dots & \partial_{x_n}P^{(i)}(U_i)  & \partial_{u_i}P^{(i)}(U_i)  
    \end{pmatrix}, B_i :=
    \begin{pmatrix}
    \partial_{z_0}E_1^{(i)}(U_i)  & \dots & \partial_{z_{nk-1}}E_1^{(i)}(U_i) \\
    \vdots & \ddots & \vdots \\
    \partial_{z_0}E_n^{(i)}(U_i) & \dots & \partial_{z_{nk-1}}E_n^{(i)}(U_i) \\
    \partial_{z_0}\Det^{(i)}(U_i)  & \dots & \partial_{z_{nk-1}}\Det^{(i)}(U_i) \\
    \partial_{z_0}P^{(i)}(U_i)  & \dots & \partial_{z_{nk-1}}P^{(i)}(U_i)
    \end{pmatrix}.
\end{align*}\vspace{-0.3cm}

Using $\Det(U_i) = 0$ and (\ref{eqn:En_diff_u}), we see that the first $n \times (n+1)$ submatrix of each $A_i$ has rank at most $n-1$. Hence, after performing operations on the first $n$ rows, we can transform the $n^\text{th}$ row of $A_i$ into the zero vector. It follows that after the suitable transformation and a permutation of rows, $\Jac_{\Sdup}(\mathcal{P})$ can be rewritten as a block triangular matrix. To give the precise form of the determinant of $\Jac_{\Sdup}(\mathcal{P})$, we first define 
\begin{equation} \label{eq:Rdef}
\small
  R := \det    \begin{pmatrix}
   \partial_{x_1}E_1^{(i)}(U_i) & \dots & \partial_{x_{n-1}}E_1^{(i)}(U_i) & y_1 \\
    \vdots & \ddots & \vdots  & \vdots \\
    \partial_{x_1}E_{n}^{(i)}(U_i)  & \dots & \partial_{x_{n-1}}E_{n}^{(i)}(U_i) & y_n 
    \end{pmatrix}\in\mathbb{K}[\{\partial_{x_\ell} E_j^{(i)}(U_i)\}_{1\leq \ell, j\leq n}][y_1, \ldots, y_n].
\end{equation}\vspace{-0.3cm}
Then it follows that $
\det(\Jac_{\Sdup})(\mathcal{P}) = \pm \big(\prod\limits_{i = 1}^{nk}\det(\Jac_{i}(U_i))\big)\cdot\det(\Lambda)$, where
\begin{equation*}
\small
    \Jac_i(u) := 
     \begin{pmatrix}
    \partial_{x_1}E_1^{(i)}(u) & \dots & \partial_{x_n}E_1^{(i)}(u)  & \partial_{u_i}E_1^{(i)}(u)  \\
    \vdots & \ddots & \vdots & \vdots \\
    \partial_{x_1}E_{n-1}^{(i)}(u)  & \dots & \partial_{x_n}E_{n-1}^{(i)}(u)  & \partial_{u_i}E_{n-1}^{(i)}(u)  \\
     \partial_{x_1}\Det^{(i)}(u)  & \dots & \partial_{x_n}\Det^{(i)}(u)  & \partial_{u_i}\Det^{(i)}(u)  \\
    \partial_{x_1}P^{(i)}(u)  & \dots & \partial_{x_n}P^{(i)}(u)  & \partial_{u_i}P^{(i)}(u)  
    \end{pmatrix} \in  \LL[u][[t]]^{(n+1) \times (n+1)}, \text{ \normalsize{and}}
\end{equation*}
\begin{equation} \label{eq:def_Lambda}
\small
   \Lambda := \big( R(\partial_{z_j} E_1^{(i)}(U_i), \ldots, \partial_{z_j} E_n^{(i)}(U_i))\big)_{1 \leq i, j+1 \leq nk} \in \overline{\LL}[[t^{\frac{1}{\star}}]]^{nk \times nk}.
\end{equation}
The proof that this product is non-zero is the content of \cref{lem:det_Jaci} and \cref{lem:Lambda}.
\begin{lem}\label{lem:det_Jaci}
    For each $i=1,\dots,nk$, the determinant of $\Jac_i(U_i)$ is non-zero.
\end{lem}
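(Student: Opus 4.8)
The plan is to show $\det(\Jac_i(U_i)) \neq 0$ by identifying its lowest-order term in $t$ (recall $U_i$ has $t$-valuation $1/k$), exactly in the spirit of the valuation analysis in \cite{BMJ06}. The matrix $\Jac_i(U_i)$ is $(n+1)\times(n+1)$: its first $n-1$ rows are the gradients in $(x_1,\dots,x_n,u_i)$ of $E_1^{(i)},\dots,E_{n-1}^{(i)}$, the $n$-th row is the gradient of $\Det^{(i)}$, and the last row is the gradient of $P^{(i)}$. First I would write out these entries explicitly using the shape of $E_j$ from \eqref{eq:E_i's}. The dominant contribution to $\partial_{x_\ell}E_j$ comes from the linear deformation term $u^{m_j-k}\cdot t\epsilon^k\gamma_{j,\ell}$ (plus the $-u^{m_j}$ on the diagonal), since the $Q_j$-term carries a factor $t^\alpha$ with $\alpha$ huge; similarly $\partial_{u_i}E_j$ is governed by differentiating $-u^{m_j} + t\epsilon^k u^{m_j-k}\gamma_{j,j} + \sum_\ell t\epsilon^k u^{m_j-k}\gamma_{j,\ell}$. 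Plugging $u = U_i = c_i t^{1/k}\epsilon + O(t^{2/k})$ (with $c_i \in \{\zeta^\ell j : 1\le j\le n,\ 1\le \ell\le k\}$) and tracking the valuation in $t^{1/k}$ of each entry, one gets a matrix whose leading term is, up to a nonzero scalar and elementary row/column scalings, a fixed constant matrix that does not depend on the (huge, irrelevant) data hidden in $Q_j$.

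**Key steps.** (1) Substitute the explicit form of $E_j^{(i)}$ and compute $\partial_{x_\ell}E_j^{(i)}$, $\partial_{u_i}E_j^{(i)}$, $\partial_{x_\ell}\Det^{(i)}$, $\partial_{u_i}\Det^{(i)}$, $\partial_{x_\ell}P^{(i)}$, $\partial_{u_i}P^{(i)}$, keeping only terms of minimal $t$-valuation after the substitution $u = U_i$; the choice $\alpha = n^2k(\beta+1)+nM$ and $\beta = \lfloor 2M/k\rfloor$ is precisely what guarantees the $Q$-terms and all ``error'' terms are of strictly higher valuation than the deformation terms everywhere in this matrix. (2) Factor out from each row its minimal power of $t^{1/k}$ (and similarly rescale columns if convenient), reducing $\det(\Jac_i(U_i))$ to a nonzero power of $t$ times the determinant of an explicit constant matrix $C_i$ over $\overline{\LL}$ whose entries involve only $c_i$, the $\gamma$'s evaluated at $t=0$ (i.e. $\gamma_{j,j}=j^k$, $\gamma_{j,\ell}=0$ for $j\neq\ell$ since $\beta\geq 1$), and combinatorial constants. (3) Compute $\det(C_i)$ directly: because the off-diagonal $\gamma$'s vanish at leading order, the relevant blocks become diagonal or near-diagonal, and one checks $\det(C_i)\neq 0$ by an explicit (small) computation — essentially the same $1$-variable computation as in the scalar case of \cite{BMJ06}, now decoupled across the $n$ indices. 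A key point is that the $\Det^{(i)}$-row and the $P^{(i)}$-row, which at leading order are the gradient in $u$ of $\prod_j(-u^k + t\epsilon^k j^k)$ and a related quantity, contribute a factor that is nonzero precisely because $U_i$ is a \emph{simple} root of $\Det(U)=0$ (by \cref{lem:det_sols} the $nk$ roots are distinct, hence $\partial_u\Det(U_i)\neq 0$ at leading order).

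**Main obstacle.** The hard part will be the bookkeeping in step (1)--(2): making sure that after substituting $u = U_i$ every non-deformation term (those coming from $Q_j$, and the cross terms from the $Y_{j,k}$ expansions) really does have strictly larger valuation than the leading terms, so that they can be discarded. This is exactly where the somewhat mysterious constants $\alpha$ and $\beta$ are used, and the inequality $\alpha,\beta \geq n$ from \cref{lem:det_sols} must be strengthened to the quantitative gap one needs here; I expect to verify, termwise, that the valuation of each discarded entry exceeds that of the corresponding kept entry by tracking powers of $t$ against powers of $u = O(t^{1/k})$. Once the leading constant matrix $C_i$ is isolated, showing $\det(C_i)\neq 0$ should be routine and mirror the scalar argument of \cite{BMJ06}; I would also double-check that the simplicity of $U_i$ (from \cref{lem:det_sols}) is genuinely what makes the $\Det$-row independent of the first $n-1$ rows at leading order, since that is the structural reason the determinant survives.
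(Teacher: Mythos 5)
Your high-level plan — track $t$-valuations, use the huge $\alpha$ to discard the $Q$-terms, use the $t^\beta$ in the off-diagonal $\gamma_{j,\ell}$ to make those contributions subdominant, and observe that the simplicity of $U_i$ (from \cref{lem:det_sols}) keeps $\partial_u\Det(U_i)$ away from zero — is exactly the strategy the paper uses, so this is the same approach, not a different one. But there is a concrete issue with the way you propose to organize the bookkeeping in steps (2)--(3) that the paper's version is specifically designed to avoid, and a step you label ``routine'' that actually carries real content.

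The paper does not factor minimal powers of $t$ out of every row and then pass to a ``constant matrix'' $C_i$; instead it expands $\det(\Jac_i(U_i))$ along the last (the $\partial_{u_i}$) column. Because $\partial_{x_\ell}\Det^{(i)}(U_i)=O(t^\alpha)$ for every $\ell$ (the entries of $\Det$ depend on the $x$-variables only through the $t^\alpha Q$-terms), every cofactor except the one paired with $\partial_{u_i}\Det^{(i)}(U_i)$ contains the full $\Det$-row and hence is $O(t^\alpha)$. This isolates a single dominant product $\partial_{u_i}\Det^{(i)}(U_i)\cdot\det(\mathcal{M})$, where $\mathcal{M}$ is the $n\times n$ block of $E_1^{(i)},\dots,E_{n-1}^{(i)},P^{(i)}$ restricted to the $x$-columns, and the remaining work is to estimate $\val_t\det(\mathcal{M})$ and compare it with $\alpha$. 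Your row-factoring version can, in principle, produce a singular leading matrix $C_i$: for instance, when $U_i$ is a root attached to the factor $-u^k+t\epsilon^k j_0^k$ with $j_0\le n-1$, the diagonal entry $\partial_{x_{j_0}}E_{j_0}^{(i)}(U_i)=U_i^{m_{j_0}-k}(-U_i^k+t\epsilon^k j_0^k)+O(t^\alpha)$ has strictly higher valuation than the naive $m_{j_0}/k$, which can zero out the corresponding entry of $C_i$ and make $\det(C_i)=0$ even though the original determinant is nonzero; passing to ``leading coefficients'' is not a valuation-safe reduction without first controlling where the minimum is attained. The cofactor-expansion framing sidesteps this by only ever comparing a fixed dominant cofactor to an $O(t^\alpha)$ remainder.

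Second, the piece you describe as ``routine, the same $1$-variable computation as in \cite{BMJ06}'' is where the paper expends most of its effort: one must compute $\val_t(\partial_{x_n}P^{(i)}(U_i))$, which is done by a second cofactor expansion inside the definition of $P$ and yields the nontrivial value $(M-1)/k$, and one must argue that the only monomial of $\det(\mathcal{M})$ with no $\beta$-dependence is the diagonal product before the comparison $\val_t\big(\partial_{u_i}\Det^{(i)}(U_i)\cdot\det(\mathcal{M})\big)<\alpha$ can be closed using the precise definitions $\beta=\lfloor 2M/k\rfloor$ and $\alpha=n^2k(\beta+1)+nM$. So: the idea is right and the structural observations (deformation dominates, simplicity of $U_i$) are correct, but to turn this sketch into a proof you should replace the ``factor out minimal powers and look at $C_i$'' step by the explicit expansion along the $\partial_u$-column, and you should carry out, rather than assume, the $P$-row valuation computation.
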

\begin{proof}[Proof sketch]
To prove that $\det(\Jac_{i}(U_i))\neq 0$ we will show that $\textsf{val}_t(\det(\Jac_{i}(U_i)))<\infty$, where $\val_t$ denotes the valuation in~$t$.
The main idea here is to expand~$\det(\Jac_{i}(U_i))$ with respect to the last column and show that the least valuation comes from the product of~$\partial_{u_i}\Det^{(i)}(U_i)$ by its associated submatrix, which we denote by $\mathcal{M}$. 

Since $\partial_{x_j}\Det^{(i)}(U_i) = O(t^\alpha)$, it is clear that for $1\leq j \leq n$ the minors associated to $\partial_{u_i}E_{j-1}^{(i)}(U_i)$ and $\partial_{u_i}P^{(i)}(U_i)$ are in $O(t^\alpha)$. It remains to show that the product of $\partial_{u_i}\Det^{(i)}(U_i)$ by $\det(\mathcal{M})$ is of valuation in~$t$ strictly lower than~$\alpha$. For $j=1,\dots,n-1$ and $\ell=1,\dots,n$ one computes that $\val_t(\mathcal{M})_{j,\ell} = \partial_{x_j}E_\ell^{(i)}(U_i) = \beta+\frac{m_j}{k}$ if $j \neq \ell$ and $\frac{m_j}{k}$ if $j=\ell$. Moreover, it follows from the definition of~$P$ and expansion along the last row of the matrix which defines~$P$ that the term with lowest $t$-valuation in~$\partial_{x_n}P^{(i)}$ is given by the product of~$\partial_{x_n, u_i}E_n^{(i)}$ by the determinant of the associated sub-matrix of $\partial_{u_i}E_n^{(i)}$. Computing this valuation while using that $\alpha,\beta$ are chosen sufficiently large, we find that $\val_t(\partial_{x_n}P^{(i)}) = (\sum_{i = 1}^{n}{\frac{m_i}{k}}) - \frac{1}{k} = (M-1)/k$. It follows that the only monomial in the determinant of $\mathcal{M}$ that has no dependency on $\beta$ comes from the product of diagonal elements of $\mathcal{M}$.
Using the definition $\alpha=n^2k\cdot (\lfloor 2M/k \rfloor + 1) + nM$ and $\beta = \lfloor 2 {M}/{k} \rfloor$, we conclude that $ \textsf{val}_t(\partial_{u_i}\Det^{(i)}(U_i)\cdot\det(\mathcal{M}))<\alpha$. 
\end{proof}

\begin{lem}\label{lem:Lambda}
    The determinant of $\Lambda$ is non-zero.
\end{lem}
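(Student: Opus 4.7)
The plan is to analyze the lowest $t$-valuation of each entry of $\Lambda$ and to show that the leading-order part of $\det\Lambda$ does not vanish, in the same spirit as the proof of~\cref{lem:det_Jaci}.

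First, I would use the linearity of $R$ in its last $n$ arguments (cf.~\eqref{eq:Rdef}): cofactor expansion along the last column gives
\[
\Lambda_{i, j+1} \;=\; \sum_{\ell=1}^{n} (-1)^{\ell+n}\, M_\ell^{(i)}\, \partial_{z_j} E_\ell^{(i)}(U_i),
\]
where $M_\ell^{(i)}$ denotes the $(n-1)\times(n-1)$ minor of the matrix defining $R$, obtained by deleting row $\ell$ and the last column. Writing $j = k(p-1)+r$ with $p \in \{1,\ldots,n\}$ and $r \in \{0,\ldots,k-1\}$, and using that $z_j$ appears in~\eqref{eq:E_i's} only through $Y_{p,k}$, with $\partial_{z_j}Y_{p,k} = -u^{r-k}/r!$, a direct computation yields
\[
\partial_{z_j} E_\ell^{(i)}(U_i) \;=\; -\frac{t\,\epsilon^{k}\,\gamma_{\ell,p}}{r!}\, U_i^{m_\ell+r-k} \;+\; O(t^{\alpha}).
\]

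Substituting and factoring $-t\epsilon^{k}/(r!\,U_i^{k})$ out of each row and column, the matrix $\Lambda$ reduces, to leading order in $t$, to the $nk \times nk$ matrix
\[
W \;:=\; \left(S_{i, p}\, U_i^{r}\right)_{\substack{1 \leq i \leq nk\\ 1 \leq p \leq n,\; 0 \leq r \leq k-1}}, \qquad S_{i,p} \;:=\; \sum_{\ell=1}^{n} (-1)^{\ell+n}\, M_\ell^{(i)}\, \gamma_{\ell, p}\, U_i^{m_\ell}.
\]
Using the asymptotics $U_i \sim c(i)\,\zeta^{s(i)}\, t^{1/k}\,\epsilon$ from~\cref{lem:det_sols}, with $c(i) \in \{1,\ldots,n\}$ and $\zeta$ a primitive $k$-th root of unity, together with the definition of $\gamma_{\ell,p}$ (which makes the $\ell = p$ term dominant in each $S_{i, p}$), I would argue that, after grouping rows by the value $c(i)$ and columns by $p$, the matrix $W$ is, modulo $O(t^{\beta})$-corrections, a block-Vandermonde matrix whose determinant factors as a product of $n$ genuine Vandermonde determinants in the $k$ distinct $U_i$ of each row group. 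Non-vanishing of each Vandermonde factor then follows from the distinctness of the $U_i$ established in~\cref{lem:det_sols}.

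The hard part will be confirming that the off-diagonal blocks of $W$ and the non-leading corrections in the minors $M_p^{(i)}$ cannot conspire to cancel the dominant Vandermonde contribution. As in the proof of~\cref{lem:det_Jaci}, this is where the large choices $\beta = \lfloor 2M/k \rfloor$ and $\alpha = n^2 k(\beta+1) + nM$ become essential: they guarantee that off-diagonal contributions carry at least an extra factor of $t^{\beta}$ relative to the leading diagonal block, thereby ruling out any accidental cancellation at leading order in~$t$.
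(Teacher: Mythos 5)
Your opening steps (the cofactor expansion of $R$ and the explicit formula for $\partial_{z_j}E_\ell^{(i)}(U_i)$ mod $t^\alpha$) are correct, and they correspond to the ``tedious but explicit computations'' the paper alludes to. But the decisive structural claim --- that $W$ becomes block-Vandermonde after grouping rows by $c(i)$ and columns by $p$ --- is not supported by what precedes it, and this is exactly where the argument does not close. First, the ``$\ell=p$ dominant'' claim inside $S_{i,p}$ is already unclear for $p<n$: deleting row $p$ forces the minor $M_p^{(i)}$ to contain an off-diagonal factor $\partial_{x_p}E_n^{(i)}(U_i)$, which carries $\gamma_{n,p}=t^{\beta}$, so the $\ell=p$ contribution to $S_{i,p}$ has $t$-valuation $M/k+\beta$, exactly matching the $\ell=n$ contribution (whose own $t^{\beta}$ comes from $\gamma_{n,p}$); there is no single dominant $\ell$. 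Second, and more fundamentally, the leading coefficient $c(i)$ of $U_i$ has no a priori correlation with the column index $p$. The dependence of the minors $M_\ell^{(i)}$ on $c(i)$ (through the suppression of the diagonal entry $\partial_{x_j}E_j^{(i)}(U_i)$ when $j=c(i)$) does not assemble into an $n\times n$ block structure indexed by $(c(i),p)$, so nothing forces the off-diagonal blocks to be smaller by $t^{\beta}$, and the claimed decomposition of $\det W$ into $n$ disjoint $k\times k$ Vandermondes does not follow.

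The paper avoids the issue by arguing globally rather than entry-by-entry. Replacing $U_i$ by indeterminates $u_i$, one shows that modulo $t^\alpha$ each entry of row $i$ of $\tilde\Lambda$ is a polynomial in $u_i$ alone, of degree at most $M-1$ and valuation at least $M-nk$. Factoring out $\prod_i u_i^{M-nk}$ leaves degree at most $nk-1$ in each $u_i$, and vanishing of the determinant whenever $u_i=u_j$ forces the \emph{full} Vandermonde $\prod_{i<j}(u_i-u_j)$ as a factor, with the remaining factor $H(t)\in\LL[t]$ independent of the $u_i$. This is a polynomial-identity argument (antisymmetry plus degree count), not a dominant-term argument, and it delivers $\det\Lambda\neq 0$ directly from the distinctness of the $U_i$ and the size of $\alpha$. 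Your sketch, by contrast, would only account for the within-group factors $(U_i-U_j)$ with $c(i)=c(j)$ and would still need a separate argument to handle the cross-group ones; as written it does not establish the claim.
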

\begin{proof}[Proof sketch]
Proving $\det(\Lambda)\neq 0$ is again done by analyzing the first terms of $\det(\Lambda)$. We prove that mod $t^\alpha$ the determinant factors as a product of $U_i$, the Vandermonde determinant $\prod_{i<j} (U_i - U_j)$, and a non-zero polynomial $H(t)$. The actual computation is somewhat technical, since $\det(\Lambda)$ is defined as the determinant of the $nk \times nk$ matrix $R(\partial_{z_j} E_1^{(i)}(U_i), \ldots, \partial_{z_j} E_n^{(i)}(U_i))_{i,j}$ whose entries are themselves determinants of the $n\times n$ matrices~(\ref{eq:Rdef}). We shall give an exposition of the proof, omitting technical details.

We denote $R_j(u) := R(\partial_{z_j} E_1^{(i)}(u), \ldots, \partial_{z_j} E_n^{(i)}(u))$ and compute $R_j(u_i)\bmod t^\alpha$ for variables $u_1,\ldots,u_{nk}$. Note that the latter is a non-zero polynomial in~$\LL[u_1,\dots,u_{nk}, t]$ which is independent of the polynomials $Q_1,\dots,Q_n$. 
Let $\tilde{\Lambda} = (R_j(u_i))_{1 \leq i,j+1 \leq nk}$ be the matrix $\Lambda$ with the $U_i$ replaced by the variables $u_i$.
With tedious but explicit computations it is possible to show that each element in the $i^{\text{th}}$ row of $\tilde{\Lambda} \bmod t^\alpha$ is a polynomial in $u_i$ of degree $\leq M-1$ and valuation $\geq M-nk$. 
Moreover, all entries of~$\tilde{\Lambda}\bmod t^\alpha$ have degree in~$t$ bounded by~$t^{n(\beta+1)}$. The choice for $\alpha$ and $\beta$ ensures 
that~$\det(\tilde{\Lambda} \bmod t^\alpha) = \det(\tilde{\Lambda}) \bmod t^\alpha$. 

As we have $M\geq nk$, it is possible to factor out $u_i^{M - nk}$ from the $i^{\text{th}}$ row of $\tilde{\Lambda}\bmod t^\alpha$ when computing its determinant. This yields polynomials of degree at most $nk-1$ in $u_i$ on the $i^{\text{th}}$ row. Moreover, it is obvious that if $u_i = u_j$ for some $i\neq j$, the determinant of $\tilde{\Lambda}$ vanishes. Hence, we can also factor out the Vandermonde determinant $\prod_{i<j}{(u_i - u_j)}$. As this latter product is of degree $nk-1$ in $u_i$, we conclude that  
 \begin{equation} \label{eq:detlambdatilde}
     \det(\tilde{\Lambda}) \equiv \prod_{i = 1}^{nk}{u_i^{M - nk}}\cdot 
     \prod_{i<j}{(u_i - u_j)}\cdot H(t) \mod t^\alpha,
 \end{equation}
 for some non-zero polynomial $H\in\mathbb{L}[t]$ whose degree only depends on~$\beta$. Recall that $\tilde{\Lambda}(U_1,\dots,U_{nk}) = \Lambda$, and all $U_i$ are distinct with valuation in $t$ of $1/k$ by \cref{lem:det_sols}. Using this, equation (\ref{eq:detlambdatilde}) and $\alpha > (M-nk)n + n + n^2k(\beta+1)$, we conclude that $\det(\Lambda) \neq 0$. 
 \end{proof}
 
Having now proved that $\det(\Jac_{\Sdup}) \neq 0$ at $\mathcal{P}$, we can apply \cref{lem:jac_ideal} and obtain that the specialized series $G_i(0), \ldots, \partial_u^{k-1}G_i(0)$ are all algebraic over~$\mathbb{K}(t, \epsilon)$. The algebraicity of the complete formal power series $G_1, \ldots, G_n$ over~$\mathbb{K}(t, u, \epsilon)$ then follows again by \cite[Lemma~2.10]{BoChNoSa22} from the invertibility of the Jacobian matrix of $E_1, \ldots, E_n$ considered with respect to the variables $x_1, \ldots, x_n$ (with $t, u, z_0, \ldots, z_{nk-1}$ viewed as parameters). 
The equalities $F_i(t^{\alpha}, u) = G_i(t, u, 0)$ finally imply that $F_1, \ldots, F_n$ are also algebraic over~$\mathbb{K}(t, u)$.

As already mentioned, a strength of the presented method is that it is effective. Recall that \cref{thm:quantitative_estimates} summarizes a bound on the algebraicity degree of all $F_i(t,u)$ and estimates the arithmetic complexity of the algorithm which computes $F_i(t,a)$.  
\begin{proof}[Proof sketch of \cref{thm:quantitative_estimates}]
Using the definition of $\alpha$ and $\beta$ in the proof of~\cref{thm:main_thm}, the result is proven along the same lines as the results in~\cite[Section~$3$]{BoChNoSa22}. The algebraicity bound of the shape ${n^{2n^2k^2}(k+1)^{n^2k^2(n+2)+n}\delta^{n^2k^2(n+2)+n}}/{(nk)!^{nk}}$ is a consequence of Bézout's theorem applied to the saturated ideal defined in \cref{lem:jac_ideal}, while the announced complexity is a consequence of~\cite[Theorem~$2$]{Schost03}. Note that the factor $(nk)!^{nk}$ comes from exploiting the structure of the duplicated system by prescribing an action of the symmetric group $\mathfrak{S}_{nk}$ on it.
\qedhere
\end{proof}

\section{Summary and future work} \label{sec:complexity}
   
We can summarize the strategy presented in \cref{sec:proof_thm} as follows: 
\begin{enumerate}
    \item Set up the deformed system~(\ref{eqn:deformed_system}) and the polynomials $\Det, P \in \LL[X_1,\dots,X_n,t,u]$:
    \begin{align*}
    \small
        \Det := 
        \det \begin{pmatrix}
            \partial_{x_1}E_1 & \dots & \partial_{x_n}E_1\\
            \vdots & \ddots & \vdots\\
            \partial_{x_1}E_n & \dots & \partial_{x_n}E_n\\
        \end{pmatrix} \quad
        \text{ \normalsize{and} } \quad
        P:= 
        \det 
    \begin{pmatrix}
    \partial_{x_1}E_1 & \dots & \partial_{x_{n-1}}E_{1} & \partial_{u}E_1\\
    \vdots & \ddots & \vdots & \vdots \\
    \partial_{x_1}E_n & \dots & \partial_{x_{n-1}}E_{n} & \partial_{u}E_n\\
    \end{pmatrix}.
    \end{align*}
    \item Set up the duplicated polynomial system $\Sdup$, consisting of the duplications of the polynomials $E_i,\Det,P$. It has $nk(n+2)$ variables and equations.
    \item Compute a non-trivial element of the saturated ideal $\langle \Sdup \rangle : \det(\Jac_{\Sdup})^\infty$.
\end{enumerate}

As illustrated in \cref{ex:eq27}, the deformation step is not always needed. In fact, it is clear that for a generic system the equation $\Det(u) = 0$ will have $nk$ distinct non-zero solutions in~$\overline{\K}[[t^{\frac{1}{\star}}]]$. Moreover, generically, a non-trivial element of both $\langle \Sdup \rangle : \det(\Jac_{\Sdup})^\infty$ and
$\langle \Sdup \rangle : (\prod_{i\neq j}{(u_i - u_j)})^\infty$
contains  the sought annihilating polynomial. In general, however, the deformation is important, as the following example shows:

\begin{ex2}
For $s$ randomly chosen in~$\mathbb{Q}$ in~(\ref{eq:exhardpart}), one cannot apply Algorithm~\ref{algo:algo1}
because the ideal~$\langle \mathcal{S}_0, \mathcal{S}_1, m\cdot(u_1 - u_2) -1\rangle$ is not $0$-dimensional, despite the fact $\Det(u) = 0$ has $2$ distinct solutions. However, as predicted by the theory, after the deformation (\ref{eqn:deformed_system}) the system indeed becomes zero-dimensional and can be solved systematically, even though the actual computation becomes quite heavy.
\end{ex2}

Our strategy produces a polynomial system with $nk(n+2)+1$ variables and equations (the additional variable and equation come from the saturation). Since, already for small values of $n,k$, solving such systems are often out of reach, we wish to briefly introduce an approach that has a better algorithmic complexity. The idea is to reduce (by eliminating $F_2, \ldots, F_n$) the initial system to a single functional equation $R = 0$, and then to use Bousquet-Mélou and Jehanne's method~\cite{BMJ06}. This reduces to solving a polynomial system with just $3nk$ variables and equations. In order to make this approach work, there are two necessary conditions: the equation $\partial_{x_1} R=0$ should contain enough (that is $nk$) roots in $\overline{\K}[[t^{\frac{1}{\star}}]]$ and the corresponding ideal should be zero-dimensional. Note that $R$ is not a DDE anymore in general, so these conditions are not guaranteed. The following proposition ensures that our deformation takes care of the first part, and the example right after shows that the second condition can still fail in practice. 

\begin{prop}\label{prop:preservation_Ui}
    Let $\textbf{\textup{\text{(E}}}_{\textbf{\textup{\text{F}}}_\textbf{\textup{\text{1}}}} \textbf{\textup{\text{)}}},\dots,\textbf{\textup{\text{(E}}}_{\textbf{\textup{\text{F}}}_\textbf{{\text{n}}}} \textbf{\textup{\text{)}}}$ be as in \cref{thm:main_thm} and let $E_1, \ldots, E_n$ be the polynomials obtained after deforming $\textbf{\textup{\text{(E}}}_{\textbf{\textup{\text{F}}}_\textbf{\textup{\text{1}}}} \textbf{\textup{\text{)}}},\dots,\textbf{\textup{\text{(E}}}_{\textbf{\textup{\text{F}}}_\textbf{{\text{n}}}} \textbf{\textup{\text{)}}}$ as in (\ref{eq:E_i's}). Let
    $U_1, \ldots, U_{nk}\in\overline{\mathbb{K}(\epsilon)}[[t^{\frac{1}{\star}}]]$ be the distinct non-zero series solutions in~$u$
    of the equation $\Det(u)=0$ and let $R\in(\langle E_1, \ldots, E_n \rangle:\Det^\infty)\cap\mathbb{L}[x_1, z_0, \ldots, z_{nk-1},t, u]$. Then~$U_1, \ldots, U_{nk}$ are
    also solutions of 
    ~$\partial_{x_1}R(u) = 0$.
\end{prop}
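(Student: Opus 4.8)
The plan is to reduce the statement to showing that a certain univariate polynomial has a double root, and to produce that double root by colliding two branches of the solution curve. Since $R\in\langle E_1,\dots,E_n\rangle:\Det^\infty$, there are an integer $N\ge 0$ and polynomials $A_1,\dots,A_n$ with $\Det^N R=\sum_{\ell=1}^nA_\ell E_\ell$. Substituting the unique series solution (each $x_\ell\mapsto G_\ell(t,u,\epsilon)$ and each $z_m$ by the corresponding derivative of some $G_\bullet$ at $0$), the equations \eqref{eq:E_i's} give $E_\ell(u)=0$, so $\Det(u)^N R(u)=0$ in the integral domain $\LL[u][[t]]$; since $\Det(u)\ne 0$ by \cref{lem:det_sols}, we get $R(u)=0$ identically, and more generally $R$ vanishes at every point of $V(\langle E_1,\dots,E_n\rangle)\setminus V(\Det)$. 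Fix $i$, write $Q_i:=(G_1(t,U_i,\epsilon),\dots,G_n(t,U_i,\epsilon))$, and let $\psi_i(x_1)\in\overline{\LL}[[t^{1/\star}]][x_1]$ denote $R$ with $t$ and the $z$-variables frozen at their values at the solution and with $u=U_i$. Then $\psi_i(G_1(t,U_i,\epsilon))=R(U_i)=0$ and $\partial_{x_1}R(U_i)=\psi_i'(G_1(t,U_i,\epsilon))$, so it suffices to prove that $x_1=G_1(t,U_i,\epsilon)$ is a root of $\psi_i$ of multiplicity $\ge 2$ (if $\psi_i\equiv 0$ there is nothing to do).

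First I would analyse the matrix $M_i:=\big(\partial_{x_j}E_\ell(U_i)\big)_{1\le\ell,j\le n}$, which is the Jacobian in $x_1,\dots,x_n$ of $E_1=\dots=E_n=0$ at $Q_i$ and satisfies $\det M_i=\Det(U_i)=0$. The crucial elementary point is that $M_i$ has rank $n-1$ and that $\ker M_i$ is spanned by a vector $v$ with $v_1\ne0$. This is read off from the computation in the proof of \cref{lem:det_sols}: after dividing the $\ell$-th row of $M_i$ by $u^{m_\ell-k}$, the matrix equals, modulo a sufficiently high power of $t$, the matrix $\mathrm{diag}\!\big(-u^k+t\epsilon^k\ell^k\big)_{1\le\ell\le n}$; evaluated at $u=U_i$ — whose leading term is a nonzero multiple of $j\,t^{1/k}\epsilon$ for a specific $j\in\{1,\dots,n\}$ — exactly the $j$-th diagonal entry acquires extra $t$-valuation, so solving $M_iv=0$ produces $v=e_j+\sum_{\ell\ne j}c_\ell e_\ell$ with $c_\ell=-t^\beta/(\ell^k-j^k)+O\!\big(t^{\beta+1/k}\big)\ne0$; in particular $v_1\ne 0$.

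Then I would pass to a formal (possibly ramified) neighbourhood of $u=U_i$, writing $u=U_i+\tau$. Because $u=U_i$ is an isolated zero of $\Det$, the solutions of $E_1=\dots=E_n=0$ near $Q_i$ are reduced for $\tau\ne0$, while at $\tau=0$ they collide into the non-reduced point $Q_i$ (non-reduced since $\det M_i=0$); hence for $\tau\ne0$ there is, besides the main branch $x^{(1)}(\tau)=\big(G_1(t,U_i+\tau,\epsilon),\dots,G_n(t,U_i+\tau,\epsilon)\big)$, a further solution $x^{(2)}(\tau)\to Q_i$ with $x^{(2)}(\tau)\ne x^{(1)}(\tau)$. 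Subtracting the two identities obtained by evaluating $E_\ell$ along $x^{(1)}(\tau)$ and along $x^{(2)}(\tau)$ (with the $z$-variables frozen and $u=U_i+\tau$), and Taylor-expanding, shows that $x^{(2)}(\tau)-x^{(1)}(\tau)$ is annihilated by a matrix converging to $M_i$, so its leading term lies in $\ker M_i=\overline{\LL}\,v$; by the previous step $x^{(2)}_1(\tau)\ne x^{(1)}_1(\tau)=G_1(t,U_i+\tau,\epsilon)$. For $\tau\ne0$ the full points corresponding to $x^{(1)}(\tau)$ and $x^{(2)}(\tau)$ lie in $V(\langle E_1,\dots,E_n\rangle)\setminus V(\Det)$, and since $R$ does not involve $x_2,\dots,x_n$ both $G_1(t,U_i+\tau,\epsilon)$ and $x^{(2)}_1(\tau)$ are roots of $R$ with $u$ replaced by $U_i+\tau$; letting $\tau\to0$ these are two distinct roots tending to $G_1(t,U_i,\epsilon)$, so $\psi_i$ has a root of multiplicity $\ge2$ there, as wanted. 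Equivalently, the image in the $(x_1,u)$-plane of the tangent space of the solution curve at $(Q_i,U_i)$ — spanned by $(\partial_uG_1(U_i),\dots,\partial_uG_n(U_i),1)$ and $(v,0)$ — is the whole plane, so $\partial_{x_1}$ is tangent to $V\!\big(\langle E_1,\dots,E_n\rangle:\Det^\infty\big)$ at the relevant point.

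The step I expect to be the main obstacle is the multiplicity/flatness bookkeeping of the third paragraph over the non-standard base ring $\overline{\LL}[[t^{1/\star}]]$ — in particular, checking that the deformation \eqref{eqn:deformed_system} creates no component of $V(\langle E_1,\dots,E_n\rangle)$ contained in $V(\Det)$ near the relevant points, so that the branch $x^{(2)}(\tau)$ is genuinely non-critical and $R$ really does vanish on it. The remaining ingredients — the kernel computation, which is a variant of the argument of \cref{lem:det_sols}, and the limit $\tau\to0$ — are routine. I would also remark that when $R$ can already be taken inside $\langle E_1,\dots,E_n\rangle$ itself (for instance a resultant after eliminating $x_2,\dots,x_n$, or the case $n=1$), a much shorter purely algebraic argument works: differentiating the Bézout identity in $x_1$ and in $x_2,\dots,x_n$, specializing at the solution, and eliminating the $A_\ell$ expresses $\partial_{x_1}R(u)$ as $\Det(u)$ divided by a minor of $M_i$ that does not vanish at $u=U_i$, whence $\partial_{x_1}R(U_i)=0$ at once.
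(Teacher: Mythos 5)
Your main argument takes a geometric route — colliding two branches of the solution variety near $u=U_i$ to force a double root in $x_1$ of the frozen polynomial $\psi_i$ — whereas the paper uses precisely the ``much shorter purely algebraic argument'' that you only sketch in your closing remark. In the paper's proof one writes a B\'ezout relation $R=\sum_i V_iE_i$, differentiates with respect to each $x_j$, and specializes at the solution with $u=U_\ell$; since $E_i(U_\ell)=0$ and $\partial_{x_j}R=0$ for $j\ge2$, this yields
$(\partial_{x_1}R(U_\ell),0,\dots,0)^{\mathsf T}
=\bigl(\partial_{x_j}E_i(U_\ell)\bigr)_{j,i}\,
(V_1(U_\ell),\dots,V_n(U_\ell))^{\mathsf T}.$
\cref{lem:det_sols} then guarantees that the transpose Jacobian has rank exactly $n-1$ (all $(n-1)\times(n-1)$ minors are non-zero), so its first row is a combination of rows $2,\dots,n$; applying this relation to the displayed vector and using that rows $2$ through $n$ evaluate to $0$ gives $\partial_{x_1}R(U_\ell)=0$ at once. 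It is worth noting that the paper's proof actually opens with ``Since $R\in\langle E_1,\dots,E_n\rangle$'', i.e.\ it invokes precisely the membership under which you say the B\'ezout route works; so your reservation that this argument only applies when $R$ can already be taken inside the ideal itself (rather than the saturation) points at a real imprecision, but one present in the paper as well, and no different treatment of the saturation is offered there.

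Regarding your main argument: the gap you yourself flag — the multiplicity/flatness bookkeeping in the branch-collision step — is indeed the crux and is not discharged. Over the base $\overline{\LL}[[t^{1/\star}]]$ one would have to verify that the relevant component of $V(E_1,\dots,E_n)$ (with $z$-variables and $t$ frozen) is finite and flat over the $u$-line near $(Q_i,U_i)$, that the fibre degree is locally constant, and that the putative second branch $x^{(2)}(\tau)$ does not escape to infinity or collapse onto a component of $V(\Det)$ as $\tau\to0$. The singularity of $M_i$ alone does not give this; one would additionally need to use the saturation by $\Det$ to exclude components contained in $V(\Det)$. Given that the short algebraic argument you already identified (and which the paper adopts) avoids all of this machinery and relies only on the rank computation supplied by \cref{lem:det_sols}, it is the preferable route; your geometric picture is a useful heuristic but, as written, remains incomplete.
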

\begin{proof}
Since $R \in \langle E_1,\dots,E_n \rangle $, there exist $V_1,\dots,V_n \in \LL[x_1,\dots,x_n, z_0, \ldots, z_{nk-1}, t,u]$ such that $R(U_\ell) = \sum_{i=1}^n E_i(U_\ell) V_i(U_\ell)$ for any $\ell = 1,\dots,nk$. Differentiating with respect to $x_j$ for $j=1,\dots,n$ and using that $E_i(U_\ell) = 0$ and that $R$ does not depend on $x_j$ for $j \geq 2$, we find \begin{equation}\label{eq:R_matrix_V}
 \small   \begin{pmatrix}
    \partial_{x_1} R(U_\ell)\\
    0\\
    \vdots\\
    0
    \end{pmatrix} = 
    \begin{pmatrix}
    \partial_{x_1}E_1(U_\ell) & \dots & \partial_{x_1} E_n(U_\ell) \\
    \vdots & \ddots & \vdots \\
    \partial_{x_n}E_1(U_\ell) & \dots & \partial_{x_n} E_n(U_\ell)
    \end{pmatrix}
    \begin{pmatrix}
    V_1(U_\ell)\\
    \vdots\\
    V_n(U_\ell)
\end{pmatrix}.
\end{equation}
By definition of $U_\ell$, the matrix $(\partial_{x_j }E_i(U_\ell))_{i,j}$ is singular and \cref{lem:det_sols} implies that each of its $(n-1)\times(n-1)$ minors is non-zero. It follows that we can express the first row of the matrix as a linear combination of the other rows, then (\ref{eq:R_matrix_V}) implies that $\partial_{x_1} R(U_\ell) = 0$.
\end{proof}

\begin{ex2}
For $s$ randomly chosen in~$\mathbb{Q}$, reducing to a single equation $R$ (by taking the resultant with respect to $x_2$), we indeed find that $\partial_{x_1} R(u) = 0$ has two distinct roots in $\overline{\K}[[t^{\frac{1}{\star}}]]$. However, the computation of a Gröbner basis reveals that the corresponding ideal has positive dimension.
\end{ex2}

\paragraph{Future work.}

The present work proves constructively and elementarily the algebraicity of solutions of systems of DDEs with one catalytic variable. Practical experiments (which are also based on further algorithmic tools under development) make us believe that our method has good potential for practical unresolved combinatorial examples as well.
Moreover, there are three most natural directions for further work. They will deal with complexity improvements for practical computations and theoretical generalizations:
\begin{enumerate}
    \item Exploit the strategy \emph{hybrid guess-and-prove}, which was used in~\cite[Section~$2.2.2$]{BoChNoSa22} to tackle first order scalar DDEs efficiently, and which turns out to be useful when dealing with huge polynomial systems.
    \item \cref{prop:preservation_Ui} ensures that the deformation~(\ref{eqn:deformed_system}) guarantees $nk$ distinct roots of $\partial_{x_1} R(u) = 0$, however, as demonstrated above, the corresponding ideal might still have positive dimension. Investigate whether it is possible to overcome this issue.
    \item Extend the results to a higher number of ``nested'' catalytic variables, where the algebraicity is still guaranteed by Popescu's theorem, but with no effective version.
\end{enumerate}

\paragraph{Acknowledgments.}
We would like to thank Mohab Safey El Din for interesting discussions and Alin Bostan for his great support and many valuable comments on the manuscript. We are also grateful to Mireille Bousquet-Mélou for exciting and enlightening discussions. We thank the anonymous referees for their helpful comments. 

Both authors were supported by the ANR-19-CE40-0018 \href{https://specfun.inria.fr/chyzak/DeRerumNatura/}{De Rerum Natura} project; the second author was funded by the DOC fellowship P-26101 of the \href{https://www.oeaw.ac.at/en/1/austrian-academy-of-sciences}{ÖAW} and the \href{https://oead.at/en/}{WTZ collaboration project} FR-09/2021. 
\vspace{-0.3cm}

\printbibliography

\end{document}